\newtheorem{theorem}{Theorem}
\newtheorem{lemma}{Lemma}
\newtheorem{Remark}{Remark}
\newtheorem{proposition}{Proposition}
\newtheorem{corollary}{Corollary}
\newtheorem{definition}{Definition}
\newcommand{\F}{\mathbb{F}}
\newcommand{\N}{\mathcal{N}_{\rm out}}
\newcommand{\Nin}{\mathcal{N}_{\rm in}}
\newcommand{\Eout}{\mathcal{E}_{\rm out}}
\newcommand{\Ein}{\mathcal{E}_{\rm in}}
\title{\LARGE \bf Controllability of Conjunctive Boolean Networks with \\Application to Gene Regulation
}
\author{Zuguang Gao, Xudong Chen, Tamer Ba\c{s}ar
\thanks{This research was supported in part by the Office of Naval Research (ONR) MURI grant N-00014-16-1-2710.
Zuguang Gao and Tamer Ba\c{s}ar are with the Coordinated Science Laboratory, University of Illinois at Urbana-Champaign. Emails: \{zgao19, basar1\}@illinois.edu. Xudong Chen is with the Department of ECEE, University of Colorado Boulder. Email: xudong.chen@colorado.edu.} 
}
\begin{document}

\maketitle
\thispagestyle{empty}
\pagestyle{empty}

\begin{abstract}
	
A Boolean network is a finite state discrete time dynamical system. At each step, each variable takes a value from a binary set. The value update rule for each variable is a local function which depends only on a selected subset of variables. Boolean networks have been used in modeling gene regulatory networks. We focus in this paper on a special class of Boolean networks, namely the conjunctive Boolean networks (CBNs), whose value update rule is comprised of only logic AND operations. It is known that any trajectory of a Boolean network will enter a periodic orbit. Periodic orbits of a CBN have been completely understood. In this paper, we investigate the {\em orbit-controllability} and {\em state-controllability} of a CBN: We ask the question of how one can steer a CBN to enter any periodic orbit or to reach any final state, from any initial state. 
We establish necessary and sufficient conditions for a CBN to be orbit-controllable and state-controllable. Furthermore, explicit control laws are presented along the analysis.

\end{abstract}


\section{Introduction}

One of the central focuses of today's genomic research is to study the regulation of gene expressions, i.e., the underlying mechanism used by a cell to execute and control the production of gene products (protein or RNA)~\cite{shmulevich2002boolean}. Questions about how to model such a mechanism become more and more relevant and have been studied to some extent. In particular, we note here two different approaches for modeling the interactions among the genes in a regulatory network---one is called the ``dynamic-system" method and the other is called the ``Boolean" method~\cite{smolen2000mathematical}. Specifically, the dynamic-system method uses ordinary differential equations to describe the rates of change of the concentrations of gene products. 
Yet, the associated differential equations are often quite complex and do not admit explicit solutions. For large-sized gene networks, computer simulation of the evolution of the dynamics usually takes a significant amount of time. The Boolean method, on the other hand, leads to some loss of accuracy due to simplifying the expression status of a gene to  a Boolean variable. Such a simplification, however, makes it possible to analyze and simulate the interactions among genes, and hence finds several natural applications (see, for example, $\lambda$-bacteriophage circuitry~\cite{hasty2001computational}). Our focus in this paper will be on the Boolean method.

Since the expression process of a gene involves participation of proteins, which are products of some other genes, genes interact with each other through their products~\cite{noual2013non}. These interactions can then be naturally described by certain types of Boolean functions whose inputs are the previous values of the  genes and the outputs are their updated values. Boolean variables, combined with Boolean functions comprise a Boolean network, which is a discrete-time dynamical system with a finite state space (finite dynamical system). 
Boolean networks were originally introduced in~\cite{kauffman1969metabolic,kauffman1969homeostasis}, later generalized in~\cite{thomas1990biological}, and have been extensively used in systems biology and (mathematical) computational biology~\cite{mcculloch1943logical,hopfield1982neural,hopfield1984neurons,akutsu1999identification,davidich2008boolean}.

{\em Boolean functions.} 
There have been extensive studies of various classes of Boolean functions which are particularly suited to the logical expression of gene regulation \cite{thomas1973boolean,raeymaekers2002dynamics}. 
Evidence has been provided in~\cite{sontag2008effect} that biochemical networks are ``close to monotone". Roughly speaking, a Boolean network is monotonic if its Boolean function has the property that the output value of the function for each variable is non-decreasing if the number of ``$1$"s in the inputs increases. 
{\color{black}
For example, Boolean networks whose Boolean functions are monomials~\cite{colon2005boolean,colon2006monomial,park2014monomial,jarrah2010dynamics} are monotonic. 
For other types of monotonic Boolean networks, we refer the reader to~\cite{aracena2004limit,zhao2005remark,melliti2016asynchronous,lingas2017towards} and the references therein. }
{\color{black} 
A special type of monotonic Boolean functions, of particular interest to us, is
those comprised of only AND operations. 
The corresponding Boolean networks are said to be {\em conjunctive}~\cite{jarrah2010dynamics}. 

{\color{black}
Conjunctive Boolean networks (CBNs) constitute an appealing model in systems biology, especially in the study of gene regulation. A gene is a portion of the DNA, and in the expression process of a gene, the DNA is first transcribed to mRNA, which is then translated to one or several proteins, called the product of that gene. Since proteins can influence the transcription and translation stages, genes interact with each other through their products. {\color{black}In a CBN, the status of each gene is either ``on'' or ``off'', indicating whether it is expressed or not, and is represented by the Boolean variable ``1'' or ``0''.  Now, consider the situation where the expression process of a gene involves the participation of several proteins, and these proteins can be produced by a selected subset of genes in the network during the previous time step. Then, this gene is expressed if and only if all the genes in the selected subset were expressed in the previous time step. Therefore, the dynamics of a CBN captures a certain aspect of the interactions among the genes while entailing a tractable analysis.}
We further refer to Fig.~\ref{gene} (originally from~\cite{translation} and reproduced here) for the validity of CBNs in modeling the process of gene expressions.
}

\begin{figure}[h]
	\centering
	\includegraphics[height=77mm]{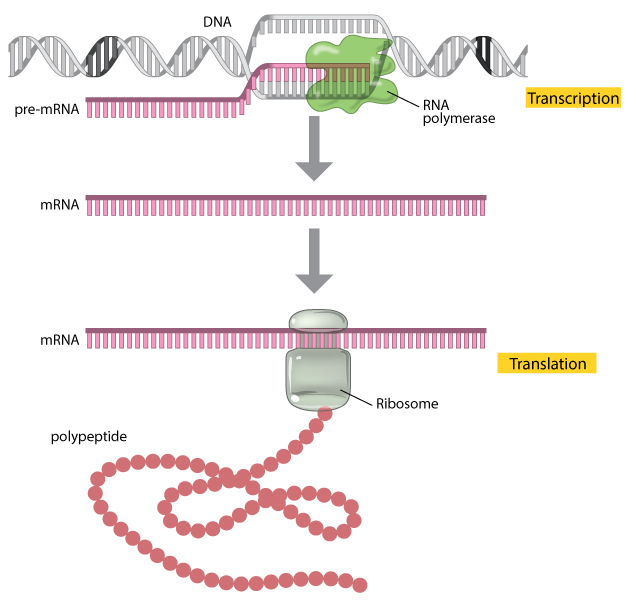}
	\caption{This figure, originally from~\cite{translation}, illustrates the expression process of a gene. It can be seen that the transcription stage requires the participation of RNA polymerase, which is essentially a protein. The translation stage involves ribosome, which contains ribosomal proteins. These proteins are all products of some other genes at the previous time steps. Thus, the gene in the figure can be expressed (holding ``$1$") if and only if all other related genes were expressed (holding ``$1$") previously. 
	}
	\label{gene}
\end{figure}

}

{\em Problem description.} 
We address in the paper the controllability of a CBN. Assuming that there is a subset of variables whose values are determined by external inputs (the controls), we ask and answer two questions. First, how can one steer the system from any initial state to any desired periodic orbit\footnote{A CBN is a finite dynamical system. Thus, for any initial condition, the trajectory generated by the system will enter a periodic orbit (also known as a limit cycle) in finite time steps.}? If this is possible, we say that the system is \emph{orbit-controllable} and the subset of variables  whose values are determined by external inputs (the controls) is termed \emph{orbit-controlling set}. Second, how can one make the system \emph{state-controllable}, meaning that the trajectory generated by the control system to be driven into any desired final state (not necessarily a state in a periodic orbit), starting from any initial condition? When the system is state-controllable, the subset of variables is termed the \emph{state-controlling set}. Note that state-controllability is a stronger notion than orbit-controllability, and hence it is more restrictive  for a subset to be a state-controlling set than to be an orbit-controlling set. {\color{black} As mentioned earlier, the control problems posed here find their applications in gene regulation, where the objective is to control the expressions of a selected subset of genes so as to steer a bio-system to reach a desired final state (or a periodic orbit)~\cite{gossen1992tight,derisi1997exploring,stragier1988processing,helene1991anti,menolascina2014vivo,milias2011silico,pathak2013optogenetic,uhlendorf2012long}, and hence to look for criteria for the selection so that the system is controllable. }

{\color{black} Reachability and observability for general Boolean networks have been addressed to some extent~\cite{
		cheng2009controllability,
		zhang2012controllability,
		liu2014some,
		liu2016state,
		luo2016controllability}. For example, \cite{cheng2010analysis} used a semi-tensor product approach to establish necessary and sufficient conditions for a given final state to be reachable from a given initial state; \cite{laschov2012controllability} also addressed the reachability question, but via the Perron$-$Frobenius theory; 
	\cite{li2015controllability} studied the controllability (as well as observability) of a Boolean network by looking at the algebraic variety of a certain ideal generated by certain polynomials defined over the finite field $\mathbb{F}_2 = \{0,1\}$. 
We adopt, in this paper, a graphical approach to address the controllability question which, to the best of our knowledge, is different from all the other existing methods, thus providing a new perspective. We provide necessary and sufficient conditions for a subset of variables to be an orbit-controlling set (Theorem~\ref{thm1}) and a state-controlling set (Theorem~\ref{thm2}).  Furthermore, explicit control laws for steering the system to a desired periodic orbit or desired final state are also provided.
While the ultimate goal is to find an orbit- or state-controlling set with minimal cardinality, the condition we establish in this paper helps reduce the size of such a set significantly. 
}

This paper is based on some preliminary results of two conference papers~\cite{orbitcontrol,statecontrol}. Specifically, this paper provides full details of analyses, proofs and examples, some of which were left out in the conference versions due to space limitation. While we have borrowed some results from~\cite{stabilityfull}  for the proof of Theorem~\ref{thm1}, the problem we
are solving in this paper differs from~\cite{stabilityfull} in the sense that
we have added controls to the network, which did not exist
in the previous work. The rest of the paper is organized as follows.
In Section~\ref{pre}, we first provide key definitions and notations  for directed graphs, binary necklace, and CBNs. We then formulate the controllability problem. In particular, we raise a two-part controllability question that is answered fully in the paper and introduce important related concepts. In Section~\ref{orbit} and Section~\ref{state}, we establish necessary and sufficient conditions for a CBN to be orbit-controllable (Theorem~\ref{thm1}) and state-controllable (Theorem~\ref{thm2}). The control procedures are also provided in Algorithms~\ref{orbitalgo} and~\ref{statealgo}. In the conclusions section, we summarize the main results of the paper and point out future research directions. The paper ends with an Appendix which contains analyses and proofs that are used to support a technical result.


\section{Preliminaries and Problem Formulation}\label{pre}

\subsection{Preliminaries}
{\em 1). Directed graph.}
We introduce here some notations associated with a directed graph (or simply digraph). Let $D=(V,E)$ be a directed graph, with $V$ the set of nodes (vertices) and $E$ the set of edges. We denote by $v_iv_j$ an edge from $v_i$ to $v_j$ in $D$. We say that $v_i$ is an {\em in-neighbor} of  $v_j$ and $v_j$ is an {\em out-neighbor} of $v_i$. 
The sets of in-neighbors and out-neighbors of node $v_i$ are denoted by $\mathcal{N}_{\rm in}(v_i)$ and
$\mathcal{N}_{{\rm out}}(v_i)$, respectively.  We write, on occasions,  $\mathcal{N}_{\rm in}(v_i; D)$ (resp. $\mathcal{N}_{\rm out}(v_i; D)$) to indicate that the in-neighbors (resp. out-neighbors) of $v_i$ are taken within the digraph $D$.
The  \emph{in-degree} and \emph{out-degree}
of node $v_i$ are defined to be $|\mathcal{N}_{{\rm in}}(v_i)|$ and $|\mathcal{N}_{{\rm out}}(v_i)|$, respectively.
We call $v_iv_j$ an \emph{out-edge} of $v_i$ and an \emph{in-edge} of $v_j$. We denote by $\Ein(v_i)$ (resp. $\Eout(v_i)$) the set of in-edges (resp. out-edges) of node $v_i$. 

Given a node $v_i$ of $V$ and a nonnegative integer $k$, we define a subset $\N^k(v_i)$ by induction: For $k = 0$, let $\N^0(v_i) := \{v_i\}$; for $k \ge 1$, we define 
\begin{equation}\label{eq:defninp2}
\N^k(v_i) := \cup_{v_j\in \N^{k-1}(v_i)} \N(v_j). 
\end{equation}
Note that if $\N^{k-1}(v_i)=\varnothing$, then $\N^k(v_i)=\varnothing$. Similarly, we define $\N^k(v_i)$ by replacing $\Nin$ with $\N$ in~\eqref{eq:defninp2}. 

Let $v_i$ and $v_j$ be two nodes of $D$. A {\em walk} from $v_i$ to $v_j$, denoted by $w_{ij}$, is a sequence $v_{i_0}v_{i_2}\cdots v_{i_m}$ (with $v_{i_0} = v_i$ and $v_{i_m} = v_j$) in which $v_{i_k}v_{i_{k+1}}$ is an edge of $D$ for all 
$k\in\{0,1,\ldots,m-1\}$. 
A walk is said to be a {\it path}, denoted by $p_{ij}$, if all the nodes in the walk are pairwise distinct. We use $P_{ij}$ to denote the set of all paths from $v_i$ to $v_j$. 
A walk is said to be a {\em cycle} if there is no repetition of nodes in the walk other than the repetition of the starting- and ending-node. The {\it length} of a path/cycle/walk is defined to be the number of edges in that path/cycle/walk. The length of a walk $w$ is denoted by $l(w)$, and the length of a path $p$ is denoted by $l(p)$.

A \emph{strongly connected graph} is a directed graph such that for any two nodes $v_i$ and $v_j$ in the graph, there is a path from $v_i$ to $v_j$. A \emph{directed acyclic graph (DAG)} is a directed graph containing no cycles. In a directed acyclic graph, a node with no in-neighbors (and hence no in-edges) is called a \emph{source node}. We note that in a DAG, any walk must also be a path.
For any digraph $D = (V, E)$, a \emph{subgraph} of $D=(V,E)$ is a digraph whose node set and edge set are subsets of $V$ and  $E$, respectively.

{\em 2). Binary necklace.}\label{binarynecklace}
A {\bf binary necklace} of length $p$ is an equivalence class of $p$-bead strings over the binary set $\mathbb{F}_2=\{0,1\}$, taking all rotations as equivalent. For example, {\color{black}in the case $p = 4$}, there are six different binary necklaces, as illustrated in Fig.~\ref{necklace}. 

\begin{figure}[h]
	\centering
	\includegraphics[height=40mm]{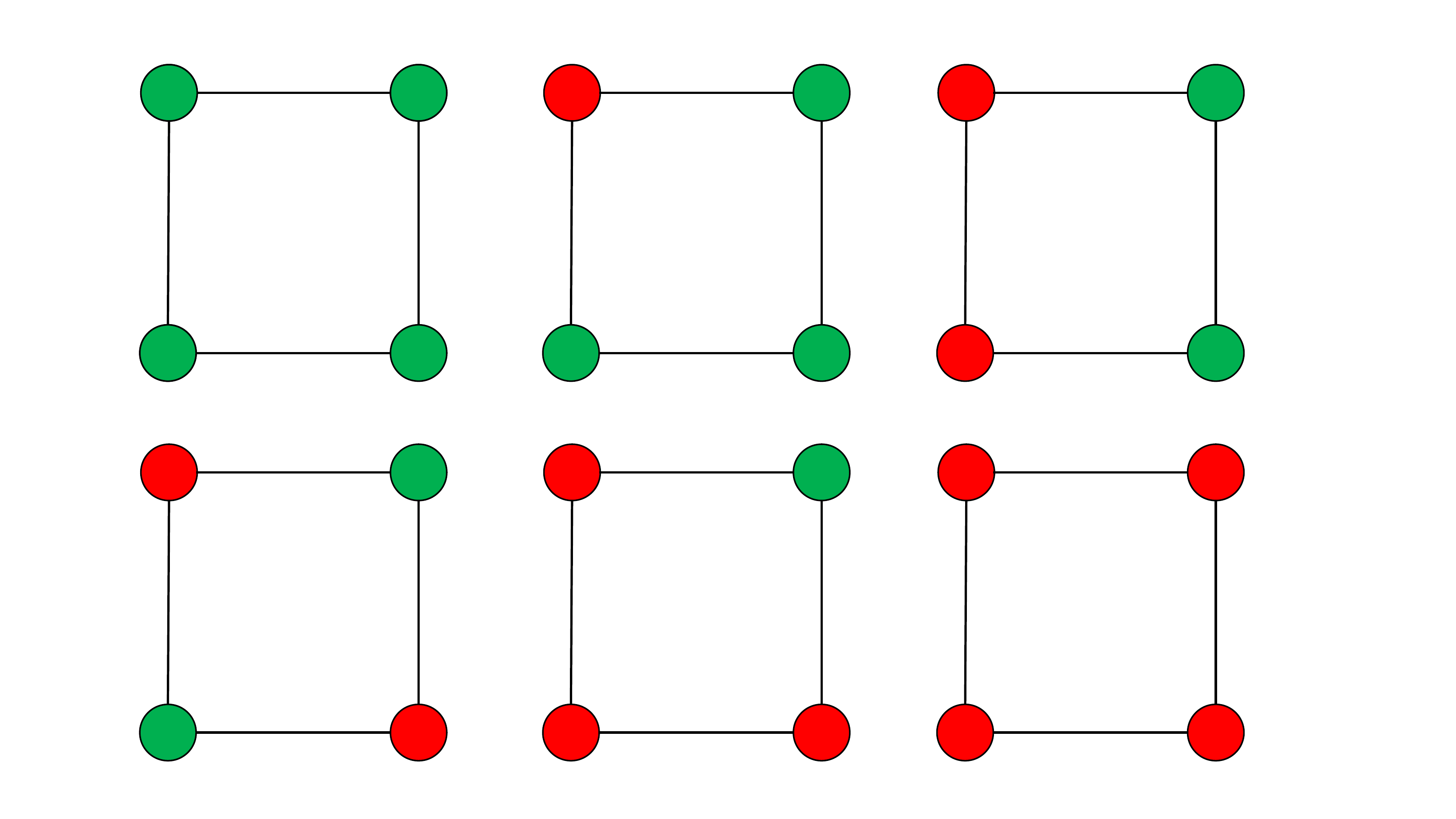}
	\caption{We illustrate here all binary necklaces of length~$4$. In the figure, if the bead is plotted in red (resp. green), then it holds value ``1'' (resp, ``0"). }
	\label{necklace}
\end{figure}

{\em 3). Conjunctive Boolean network} (CBN).
Let $\mathbb{F}_2:=\{0,1\}$ denote the finite field. 
A \emph{Boolean network} (BN) on $n$ Boolean variables $x_1(t),\ldots, x_n(t)\in \F_2$ is a discrete-time finite state dynamical system, whose update rule can be described by a set of Boolean functions $f_1,\ldots, f_n:$ 
$$
x_i(t+1) = f_i(x_1(t),\ldots, x_n(t)), \hspace{5pt} \forall i = 1, \ldots, n.  
$$
Let $x(t):= (x_1(t),\ldots, x_n(t)) \in \F_2^n$ be the state of the BN at time~$t$. Further, let $$f:= (f_1, \ldots, f_n): x(t) \mapsto x(t+1).$$ We refer to $f$ as the {\bf value update rule} associated with the BN. Note that in the sequel, all the Boolean variables are updated synchronously (in parallel) at each time step. For asynchronous (sequential) value updates,  we refer the reader to \cite{goles2012disjunctive,noual2013non,ruz2013preservation} for details.

Since a BN is a finite dynamical system, it is well known that for any initial condition $x(0)\in \F_2$, the trajectory $x(0), x(1), \ldots$ will enter a {\em periodic orbit} in a finite amount time. More precisely, there exists a time $t_0 \ge 0$ and an integer number $p \ge 1$ such that $x(t_0 + p) = x(t_0)$. Moreover, if $x(t_0 +q) \neq x(t_0)$ for any $q = 1,\ldots, p-1$, then the sequence $\{x(t_0), \ldots, x(t_0 + p-1)\}$, taking rotations as equivalent, is said to be a {\bf periodic orbit}, and $p$ is its {\bf period}. If the period of a periodic orbit is one, i.e., $x(t_0) = x(t_0 + k)$ for any $k \ge 1$, then the state $x(t_0)$ is said to be a {\bf fixed point}. 
{\color{black}We refer the reader to~\cite{aracena2016number,robert2012discrete} for studies on the number of fixed points of a BN.}
   
 We now introduce the following definition: 


\begin{definition}[Conjunctive Boolean network~\cite{jarrah2010dynamics}]\label{CBN}
A Boolean network $f = (f_1, \ldots, f_n)$  is {\bf conjunctive} if each Boolean function $f_i$, for all $i = 1,\ldots, n$, can be expressed as follows:
\begin{equation}\label{eq:updaterule}
f_i(x_1,\ldots, x_n)=\prod^n_{j = 1} x_j^{\epsilon_{ji}}
\end{equation}
with $\epsilon_{ji}\in \{0,1\}$ for all $j=1,\ldots,n$. 
\end{definition}


{\color{black}Note that states $(0,\ldots,0)$ and $(1,\ldots,1)$ are always fixed points for CBNs.}
We can associate with each CBN a unique directed graph, termed \emph{dependency graph}, whose definition is given below:

\begin{definition}[Dependency graph~\cite{jarrah2010dynamics}]\label{dependency}
Let $f = (f_1, \ldots, f_n)$ be the value update rule associated with a CBN. The associated {\bf dependency graph} is a directed graph $D = (V,E)$ of $n$ vertices. An edge from $v_i$ to $v_j$, denoted by $v_iv_j$,  exists in $E$ if $\epsilon_{ij} = 1$.   
\end{definition}



\subsection{Problem Formulation}\label{formulation}

In this section, we formally introduce the problem of how it would be possible to control a CBN. Specifically, we assume that there is a selected subset of nodes whose Boolean values can be controlled at any time.  We address in the paper the following controllability question:   
\begin{enumerate}
\item[{\it Q:}] {\it How can one steer a CBN from any initial state to any final state (or any periodic orbit) by controlling the values of the selected nodes?}
\end{enumerate}
We provide a complete answer to this question toward the end of the paper.



To proceed, we first introduce the control model in precise terms.  Let $D = (V, E)$ be the dependency graph of  a CBN. A node $v_i$ of $D$ is said to be a {\bf control node} if its value at any time step is determined completely by an external  control input. We denote by $V^*$ the subset of $V$, comprised of all the control nodes in the network. Then, the control model can be described as follows:   
\begin{equation}\label{eq:controlmodel}
x_i(t ) = 
\left \{
\begin{array}{ll}
u_i(t) & \mbox{ if } v_i \in V^*, \\
f_i(x(t -1))  & \mbox{ otherwise,}
\end{array}
\right.
\end{equation}
where the $u_i(\cdot)$'s  are the external control inputs, and the $f_i$'s are the Boolean functions given by~\eqref{eq:updaterule}. 
{\color{black}
For example, if the $u_i$'s are constant, then~\eqref{eq:controlmodel} simply models the mutants in genetic networks (i.e., $u_i=0$ represents a knock out of gene $i$).
}
We now introduce the following definitions:

{\color{black}
\begin{definition}[Orbit-controlling set]\label{orbitset}
	A subset $V^*\subseteq V$ is an {\bf orbit-controlling} set for~\eqref{eq:updaterule} if for any initial condition $x\in\F_2^n$ and any periodic orbit $\mathcal{O}$ of system~\eqref{eq:updaterule}, there exists a time $T$ and a set of control laws $u_i(t)$, for $v_i\in V^*$ and $0\leq t\leq T$, such that the trajectory generated by system~\eqref{eq:controlmodel} with $x(0)=x$, reaches a state in $\mathcal{O}$ at $t=T$.
	
\end{definition}

\begin{definition}[State-controlling set]\label{stateset}
	A subset $V^*\subseteq V$ is a {\bf state-controlling set} for~\eqref{eq:updaterule} if for any initial condition $x$ and any final state $x^*$, there exists a time $T$ and a set of control laws $u_i(t)$ for $v_i\in V^*$ and $0\leq t\leq T$ such that the trajectory generated by system~\eqref{eq:controlmodel} with $x(0)=x$, reaches $x^*$ at $t=T$.
	
\end{definition}


Note that a state-controlling set is an orbit-controlling set, but the converse is not necessarily true.}
Also, note that a state-controlling set always exists as one can set $V^* = V$. In this case, each node is a control node, and if we let $u_i(0) = x^*_i$, for all $v_i\in V$, then $x(0) = x^*$. 
However, the cost of controlling every node in the network could be extremely high, especially when the size of the network is large. From the biological perspective, controlling all genes in a bio-system is generally not feasible. One thus looks for a proper subset $V^*$, with $|V^*|\ll |V|$, such that $V^*$ is an orbit-controlling set (resp. state-controlling set). We take in the paper the first step to solve such a minimal controllability problem by providing a necessary and sufficient condition for a set $V^*$ to be an orbit-controlling (resp., a state-controlling) set. 

We recall that for a node $v_i$, with $v_i \notin V^*$,  the value $x_i(t)$ depends on the values of its incoming neighbors at time~$(t-1)$:
\begin{eqnarray}\label{xtneighbor}
x_i(t) =  \prod_{v_j\in \mathcal{N}_{\rm in}(v_i)}x_j(t-1)
\end{eqnarray}
The in-edges of $v_i$ thus demonstrate the information flow at the node~$v_i$. On the other hand, if $v_i$ is a control node, then from the model~\eqref{eq:controlmodel}, the value $x_i(t)$, at any time $t$,  is determined completely by an external input, rather than the values of its incoming neighbors.  Thus, the in-edges of $v_i$ in the dependency graph $D$ are unnecessary for the control model~\eqref{eq:controlmodel}. We thus modify the definition of the dependency graph to accommodate the existence of control nodes by deleting the in-edges of each control node in $V^*$. Specifically, we have the following definition:

\begin{definition}[Derived graph~\cite{statecontrol}]\label{subgraph}
	Let $D=(V,E)$ be the dependency graph associated with a CBN. Let $V^*\subset V$ be the set of control nodes associated with system~\eqref{eq:controlmodel}.  The {\bf derived graph} $D' = (V, E')$ is a digraph, with $V$ the node set and $ E' =E \setminus \cup_{u\in V^*}\Ein(u)$ the edge set.
\end{definition}


\section{Orbit-controllability}\label{orbit}
We investigate in this section the orbit-controllability of a CBN.  
{\color{black} To proceed, we first note that the asymptotic behavior of a CBN was investigated mostly over strongly connected digraphs, and little is known for other cases. In particular, it is known that the periodic orbits of strongly connected CBNs can be identified with binary necklaces of a certain length: Let $D = (V, E)$ be strongly connected, and denote by $D_1 = (V_1, E_1),\ldots, D_N = (V_N, E_N)$, with $V_i \subset V$ and $E_i\subset E$, the cycles of $D$. Let $n_i$ be the length of $D_i$, and~$p^*$ be the greatest common divisor of $n_i$, for $i = 1,\ldots, N$: $$p^*:= {\rm gcd}\{n_1, n_2, ..., n_N\},$$ 
which is also known as the \emph{loop number} of $D$~\cite{colon2005boolean}.  We need the following fact:

\begin{lemma}\label{bijec}
If the dependency graph is strongly connected, then the period of the associated CBN is a divisor of~$p^*$. 
Furthermore, there is a bijection between the set of periodic orbits and the set of binary necklaces of length $p^*$: We identify a periodic orbit $\{x(t_0), \ldots, x(t_0 + p  - 1)\}$ with the  corresponding binary necklace $x_i(t_0)x_i(t_0 + 1)\ldots x_i(t_0 + p^* - 1)$, where the choice of a vertex $v_i$ can be arbitrary. 
\end{lemma}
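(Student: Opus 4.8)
The plan is to reduce the whole statement to a combinatorial description of the orbit values in terms of the cyclic (imprimitivity) structure of the strongly connected digraph $D$. First I would unroll the conjunctive update rule~\eqref{xtneighbor}. By induction on $t$, the iterated product-of-AND structure gives
\begin{equation}
x_i(t) = \prod_{v_j \in W^t(v_i)} x_j(0),
\end{equation}
where $W^t(v_i)$ denotes the set of nodes $v_j$ admitting a walk of length exactly $t$ to $v_i$; the base case $t=0$ is immediate, and the inductive step is just~\eqref{xtneighbor} together with the fact that a length-$t$ walk to $v_i$ is a length-$(t-1)$ walk to some in-neighbor of $v_i$ followed by the edge into $v_i$. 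Since we work in $\F_2$ under AND, walk multiplicities are irrelevant, so $x_i(t)=1$ iff $x_j(0)=1$ for every $v_j\in W^t(v_i)$.

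Next I would exploit the standard imprimitivity structure of a strongly connected digraph whose cycle lengths have gcd $p^*$: the vertex set partitions as $V=V_0\sqcup\cdots\sqcup V_{p^*-1}$ so that every edge runs from $V_k$ to $V_{(k+1)\bmod p^*}$. Writing $c(v_i)$ for the class index of $v_i$, any length-$t$ walk to $v_i$ must start in $V_{(c(v_i)-t)\bmod p^*}$. The crucial refinement---and the step I expect to be the main obstacle---is that for all sufficiently large $t$ this containment becomes an equality, $W^t(v_i)=V_{(c(v_i)-t)\bmod p^*}$, i.e. every vertex of the correct class admits a length-$t$ walk to $v_i$. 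This is precisely the Perron--Frobenius statement that the adjacency matrix of $D$ is irreducible with index of imprimitivity $p^*$ and that the restriction of its $p^*$-th power to each class is primitive; combinatorially it is a numerical-semigroup argument (all sufficiently large multiples of $p^*$ are attainable as sums of cycle lengths, and cycles can be spliced into a fixed connecting walk by strong connectivity). I would cite this from the borrowed results of~\cite{stabilityfull} or prove it directly. Granting it, for large $t$,
\begin{equation}
x_i(t) = m_{(c(v_i)-t)\bmod p^*}, \qquad m_k := \prod_{v_j\in V_k} x_j(0).
\end{equation}
Two consequences follow at once: $x_i(t+p^*)=x_i(t)$ on the periodic orbit, so the period $p$ divides $p^*$; and any two vertices in the same class hold equal values at each time, so a periodic orbit is exactly a cyclically-shifting assignment of bits to the $p^*$ classes. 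This settles the first assertion.

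For the bijection I would use this last description. Reading $x_i(t_0),\ldots,x_i(t_0+p^*-1)$ at a fixed vertex $v_i$ traces the bit-string $(m_0,\ldots,m_{p^*-1})$ backwards, starting from index $c(v_i)-t_0$; changing the time origin $t_0$ or the vertex $v_i$ only changes this starting index, hence produces a cyclic rotation of the same string. Thus the associated binary necklace is independent of both choices, and the map from orbits to necklaces of length $p^*$ is well defined. Injectivity follows because an orbit is determined, up to the cyclic shift realized by the dynamics, by its class bit-vector $(m_0,\ldots,m_{p^*-1})$, which is exactly what the necklace records up to rotation. For surjectivity, given a necklace represented by bits $(b_0,\ldots,b_{p^*-1})$, I would set $x_i := b_{c(v_i)}$; since all in-neighbors of a class-$c$ node lie in class $c-1$ and all carry the common value $b_{c-1}$, the update rule sends $x$ to the cyclically shifted assignment, so $x$ lies on a periodic orbit whose necklace is the prescribed one. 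This establishes the bijection.
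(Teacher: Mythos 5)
Your proof is correct, but it is not the paper's route: the paper gives no proof of Lemma~\ref{bijec} at all, deferring to~\cite{jarrah2010dynamics,gao2016periodic,stabilityfull}, and the machinery it does develop (in the Appendix, borrowed from~\cite{stabilityfull}) packages the content differently --- it partitions $V$ into the equivalence classes $[v_0],\ldots,[v_{p^*-1}]$, builds the irreducible components $G_k$ and the induced dynamics $g_k = f^{p^*}_{U_k}$, and then uses the loop-number-one case (periodic orbits of an irreducible strongly connected CBN are fixed points) to conclude, via Proposition~\ref{dynamic} and Corollary~\ref{same}, that orbit states are constant on each class and shift cyclically. Your argument reaches the same structural picture head-on: unrolling the AND dynamics to $x_i(t)=\prod_{v_j\in W^t(v_i)}x_j(0)$ and showing that $W^t(v_i)$ eventually fills an entire imprimitivity class. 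These are two packagings of the same combinatorial fact --- your ``eventual surjectivity of walks within a class'' is exactly the primitivity of the components $G_k$, and your numerical-semigroup/splicing sketch is a legitimate self-contained proof of it (closed walks at a vertex decompose into cycles, so their lengths realize all sufficiently large multiples of $p^*$). What your route buys is elementariness and self-containment: no induced dynamics and no appeal to the classification of orbits of irreducible CBNs. What the paper's route buys is reusability: the decomposition, Proposition~\ref{dynamic} and Corollary~\ref{same} are needed again to prove Proposition~\ref{initial}, so the lemma comes essentially for free once that machinery is in place.

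One small slip you should repair: with edges running from $V_k$ to $V_{(k+1)\bmod p^*}$, in-neighbors sit one class lower, so time evolution reads the class bit-vector \emph{backwards}; your surjectivity assignment $x_i := b_{c(v_i)}$ therefore produces an orbit whose necklace is the reversal of $b_0\ldots b_{p^*-1}$, not that string itself, and reversal is not in general a rotation. This does not break the argument --- reversal is an involution on the set of necklaces, so well-definedness, injectivity, and surjectivity all survive --- but to realize the prescribed necklace on the nose you should instead set $x_i := b_{(-c(v_i)) \bmod p^*}$.
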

}

We refer the reader to~\cite{jarrah2010dynamics,gao2016periodic,stabilityfull} for proofs of Lemma~\ref{bijec}. For the remainder of the paper, we let $S$ be the set of periodic orbits. Note, in particular, that from Lemma~\ref{bijec} the two binary necklaces $s=0\ldots 0$ and $s=1\dots 1$ correspond to the fixed points $x = (0,\ldots, 0)$ and $x = (1,\ldots, 1)$, respectively. We further introduce the following definition: 
With the preliminaries above, we establish the first main result of the paper:

{\color{black}
\begin{theorem}\label{thm1}
	Let the dependency graph $D=(V,E)$ of a conjunctive Boolean network be strongly connected. Then, a subset $V^*$ is an {\color{black}orbit-controlling set} if and only if the associated derived graph $D'$ is acyclic.  
\end{theorem}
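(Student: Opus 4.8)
The plan is to prove the two implications separately. For \emph{necessity} (orbit-controlling $\Rightarrow$ $D'$ acyclic) I would argue by contraposition, exhibiting a monotone invariant that obstructs reaching the all-ones fixed point whenever $D'$ contains a cycle. For \emph{sufficiency} ($D'$ acyclic $\Rightarrow$ orbit-controlling) I would give an explicit, constructive control law that synchronizes the controlled trajectory to any prescribed periodic orbit, exploiting the fact that an acyclic $D'$ turns the network into a finite-memory feedforward system.

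For necessity, suppose $D'$ has a cycle $C$. Since every control node has in-degree zero in $D'$, no node of $C$ can be a control node; hence all nodes of $C$ obey the conjunctive rule $x_i(t)=\prod_{v_j\in\Nin(v_i)}x_j(t-1)$, and each $v_i\in C$ has its cyclic predecessor among its in-neighbors. I would then track the product $y(t):=\prod_{v_i\in C}x_i(t)$. Because the predecessor map is a bijection of $C$, regrouping the factors gives $y(t)=y(t-1)\cdot\big(\prod_{v_i\in C}\prod_{v_j\in\Nin(v_i)\setminus\{\mathrm{pred}(v_i)\}}x_j(t-1)\big)\le y(t-1)$, so $y$ is non-increasing along \emph{every} trajectory of~\eqref{eq:controlmodel}, irrespective of the inputs. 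Choosing any initial condition with a single cycle node equal to $0$ forces $y(0)=0$, hence $y(t)=0$ for all $t$, so the state never equals $(1,\dots,1)$. As $(1,\dots,1)$ is a fixed point (thus a periodic orbit), $V^*$ fails to be orbit-controlling, which proves the contrapositive.

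For sufficiency, I would first observe that, since $D$ is strongly connected, every non-control node keeps in-degree $\ge 1$ in $D'$, so the source nodes of the DAG $D'$ are \emph{exactly} the control nodes $V^*$, and every vertex is reachable from $V^*$; let $L$ be the length of the longest path in $D'$. Given a target orbit $\mathcal{O}$, fix the free-system trajectory $z(t)=f^t(z(0))$ with $z(0)\in\mathcal{O}$, which is periodic and stays in $\mathcal{O}$, and apply the input $u_c(t)=z_c(t)$ at every control node. I would then prove by induction on the depth $d_i$ (longest distance from a source) that $x_i(t)=z_i(t)$ for all $t\ge d_i$: control nodes agree for all $t$ by construction, and for a non-control $v_i$ all its in-neighbors have strictly smaller depth, so the conjunctive update propagates the equality one step forward in time. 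Consequently $x(T)=z(T)\in\mathcal{O}$ for every $T\ge L$, which both establishes orbit-controllability and yields the explicit control law.

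The step I expect to be the main obstacle is making the sufficiency synchronization fully rigorous: one must verify that the arbitrary initial condition on the non-control nodes is completely washed out after $L$ steps, which hinges precisely on the acyclicity of $D'$ (finite memory), and one must handle the timing bookkeeping carefully (whether the inputs act from $t=0$ or $t=1$, and the exact horizon $T$). The necessity direction is conceptually the cleaner trick once the monotone quantity $y(t)$ is identified; the real work there is recognizing that this product can only decrease even when an adversary is free to choose all the controls.
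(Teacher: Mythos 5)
Your proposal is correct, and its two halves relate to the paper differently. The necessity half is essentially the paper's own argument in different packaging: the paper starts the system at $(0,\ldots,0)$ and traces a zero backwards around a cycle of $D'$ that avoids $V^*$, while you encode the same persistence-of-zeros phenomenon as the monotone invariant $y(t)=\prod_{v_i\in C}x_i(t)$; both proofs hinge on the observation that every node of a cycle of $D'$ is uncontrolled and retains its cyclic predecessor as an in-neighbor, so the fixed-point orbit $\{(1,\ldots,1)\}$ is unreachable no matter what the inputs do. The sufficiency half, however, is genuinely different and more elementary than the paper's. The paper's Algorithm~\ref{orbitalgo} first floods the network with ones (Proposition~\ref{while}: the state $(1,\ldots,1)$ is reached within $n-1$ steps), then feeds the $p^*$ digits of the target binary necklace into a single control node, and finally invokes the Appendix machinery (the irreducible components $U_0,\ldots,U_{p^*-1}$, the induced dynamics, Lemma~\ref{pro:NinNout}, Proposition~\ref{initial}, Corollary~\ref{same}) to show that the ensuing free trajectory converges to the target orbit after an additional, unquantified finite time. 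You instead pick a reference trajectory $z(t)$ inside $\mathcal{O}$, copy it at the control nodes, and show by induction on DAG depth that the controlled state coincides with $z(t)$ for all $t\ge L$. The induction does close rigorously, contrary to your worry: for an uncontrolled node $v_i$ one has $\Nin(v_i;D')=\Nin(v_i;D)$ (only in-edges of control nodes are deleted), so the controlled and free coordinate updates agree, and since $D$ is strongly connected the sources of $D'$ are exactly $V^*$, so every in-neighbor of an uncontrolled node has strictly smaller depth and the equality $x_j(t-1)=z_j(t-1)$ propagates to $x_i(t)=z_i(t)$; the inputs act from $t=0$ onward by~\eqref{eq:controlmodel}, so the base case is immediate. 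Your route buys a proof that bypasses the necklace bijection of Lemma~\ref{bijec} and the entire decomposition apparatus, and it yields an explicit, initial-condition-independent horizon $T=L\le n-1$ at which the state is exactly in $\mathcal{O}$, improving on the paper's $n+p^*-1$ control steps followed by an unbounded (though finite) convergence period. What the paper's longer route buys in exchange is a control law that needs only the $p^*$-bit necklace pattern applied at one control node, rather than the full reference state $z(0)\in\mathcal{O}$ and its trajectory, together with the component-level analysis it reuses elsewhere.
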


\begin{Remark}
	Recall that a source node is defined as a vertex with no in-edges. Since $D$ is strongly connected, there is no source node in $D$. In $D'$, however, we have eliminated all in-edges of vertices in $V^*$. Thus, if $D'$ is acyclic, then the nodes in $V^*$ are necessarily the source nodes of $D'$ and vice versa. 
\end{Remark} 

Recall that $V_1,\ldots,V_N$ are the vertex sets of the cycles of $D$. Then, the statement of Theorem~\ref{thm1} is equivalent to the following statement:  $V^*\subseteq V$ is an orbit-controlling set if and only if
	\begin{eqnarray}\label{empty}
		V^*\cap V_i\neq \varnothing,\ \ \forall i=1,\ldots,N
	\end{eqnarray}
	}

{\color{black}
{\em Illustration of Theorem~\ref{thm1}.} We consider here a CBN with two different sets of control nodes, as shown in Fig.~\ref{Vbar}.
The associated derived graphs are shown in Fig.~\ref{DAG}, which are acyclic. Thus in both cases, the control nodes (vertices colored blue) form an orbit-controlling set. To check~\eqref{empty}, we note that there are two cycles in the graph, whose vertex sets are $V_1=\{v_1,v_2,v_3,v_4,v_5,v_6\}$ and $V_2=\{v_1,v_2,v_7,v_8\}$, respectively. On the left of Fig.~\ref{Vbar} (and Fig.~\ref{DAG}), $V^*=\{v_2\}$, and thus $$V^*\cap V_1=V^*\cap V_2=\{v_2\}\neq \varnothing.$$ On the right of Fig.~\ref{Vbar} (and Fig.~\ref{DAG}), $V^*=\{v_4,v_7\}$, and thus $$V^*\cap V_1=\{v_4\}\neq \varnothing,\hspace{0.5 cm} V^*\cap V_2=\{v_7\}\neq \varnothing.$$
}

\begin{figure}[h]
	\centering
	\includegraphics[height=50mm]{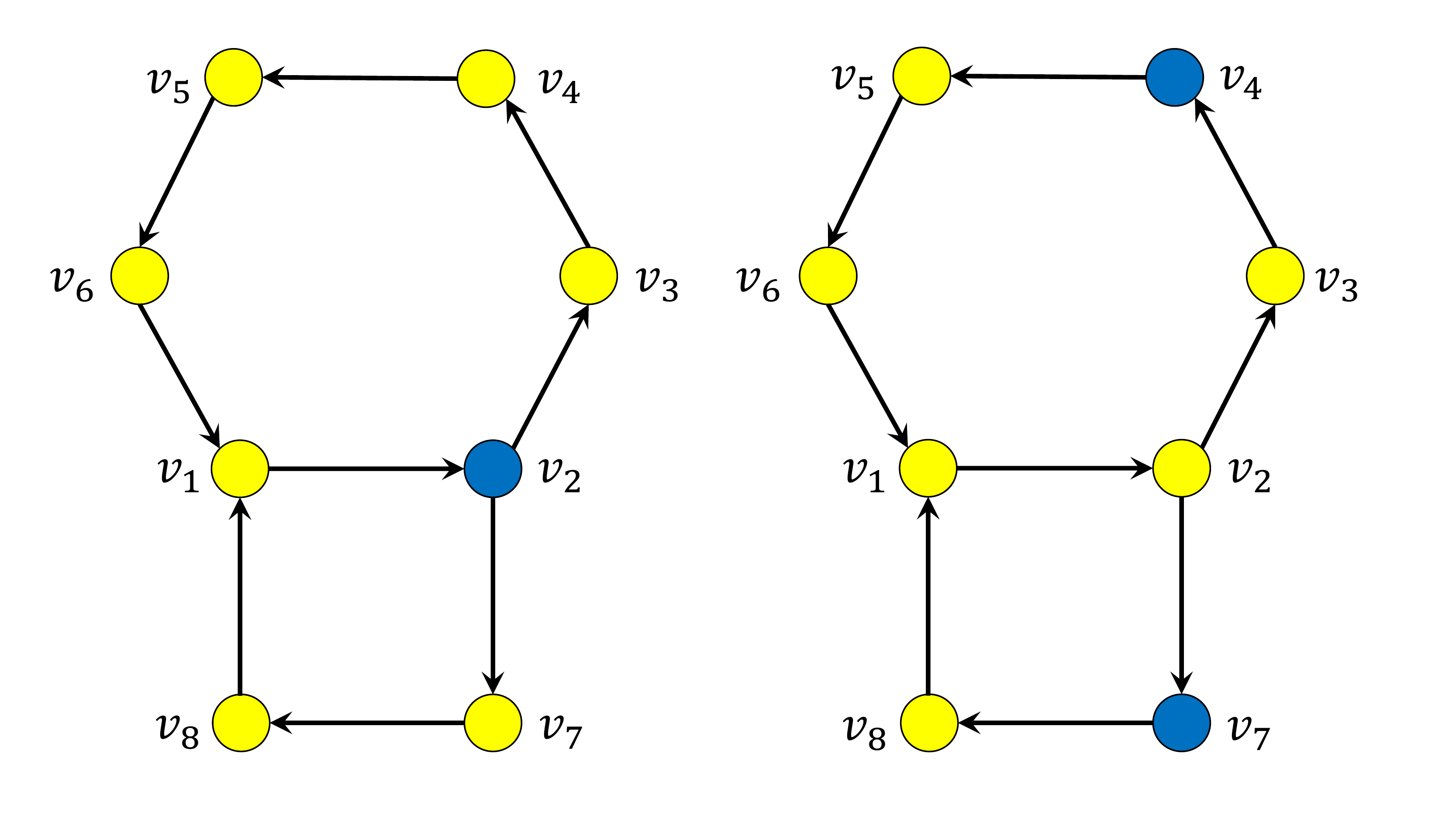}
	\caption{The above are two examples of orbit-controlling sets. Vertices colored blue are in the orbit-controlling set. The graph has two cycles. In the left figure, the only vertex in the orbit-controlling set is shared by both cycles. In the right figure, we have picked one vertex in each cycle to be in the orbit-controlling set.}
	\label{Vbar}
\end{figure}


\begin{figure}[h]
	\centering
	\includegraphics[height=50mm]{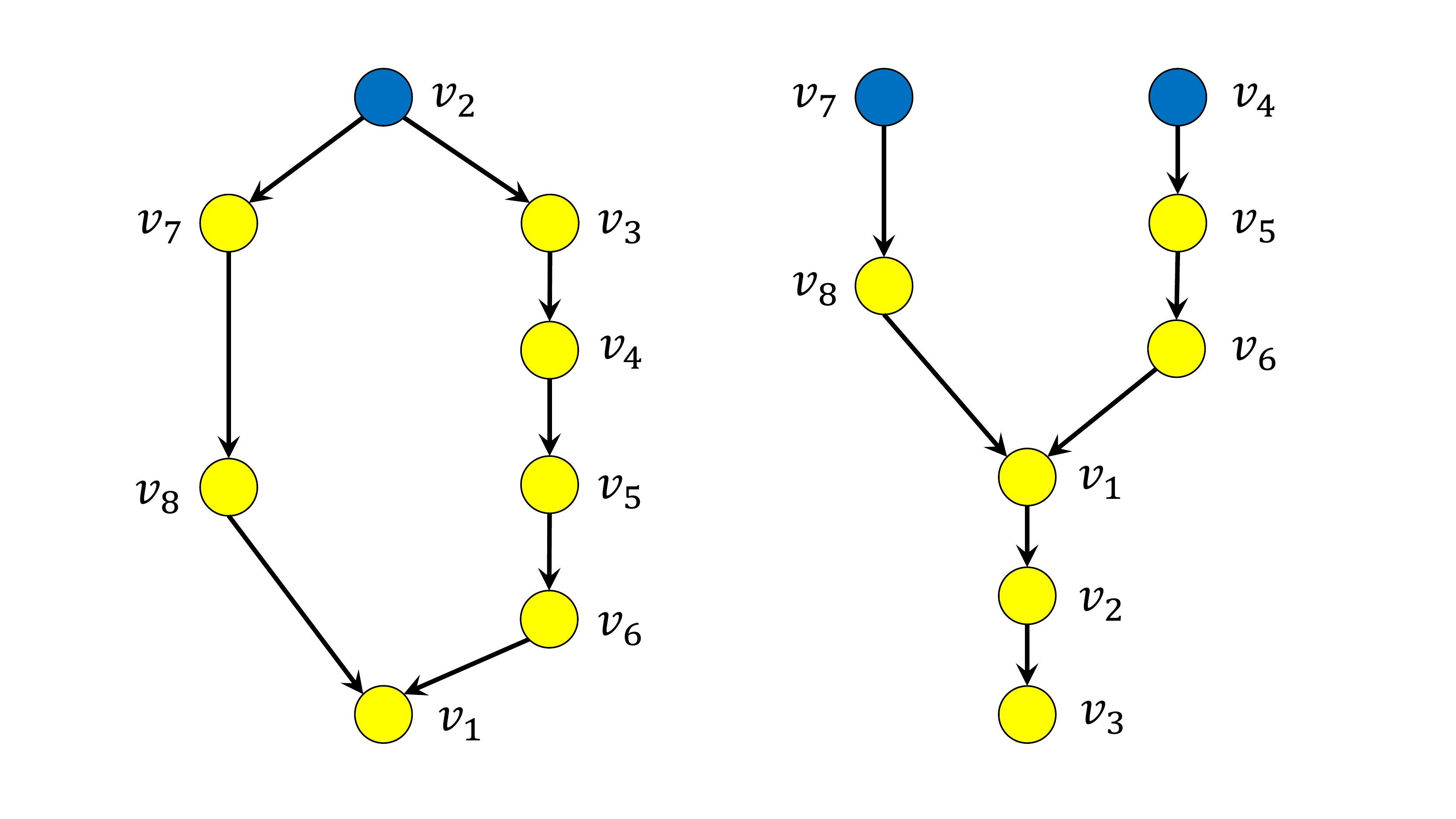}
	\caption{The above are two examples of $D'$. The left (right) figure is obtained by removing the in-edges of vertices in the orbit-controlling set in the left (right) figure of Fig.~\ref{Vbar}. It can be seen that the $D'$ obtained this way is acyclic, and the set of source nodes is exactly the orbit-controlling set.}
	\label{DAG}
\end{figure}

The remainder of this section is devoted to the proof of Theorem~\ref{thm1}. {\color{black} We first introduce a notation: For a subset $V' = \{v_{i_1},\ldots, v_{i_m}\}$ of $V$, we define $x_{V'} := (x_{i_1},\ldots, x_{i_m})$. 
}
We then first prove the necessity, i.e., if $V^*$ is an orbit-controlling set, then $V^*\cap V_i\neq \varnothing$, $\forall i=1,\ldots,N$. 

\begin{proof}[Proof of necessity of~\eqref{empty}]
	The proof is carried out by contradiction. Suppose to the contrary that for some cycle $D_i$, $V^*\cap V_i=\varnothing$. Then, given an initial condition $x(0)=(0,\ldots,0)\in \mathbb{F}^n_2$, it is never possible for the trajectory to reach the periodic orbit $s=1\ldots 1$. To see this, recall that $s=1\ldots 1$ corresponds to the fixed point $x=(1,\ldots,1)$, which is the only state in $s$. Then, for each vertex $v_j\in V_i$, there is a vertex $v_k\in V_i$ such that {\color{black}$v_k\in \Nin(v_j)$}. Since $x_k(0)=0$, $x_j(1)=0$ by the value update rule. Thus, {\color{black}$$x_{V_i}(t)=x_{V_i}(t-1)=\cdots=x_{V_i}(0)=(0,\ldots,0),$$}which implies that the trajectory will never enter $s=(1,\ldots,1)$. This contradicts our initial assumption that $V^*$ is an orbit-controlling set.
\end{proof}

We next prove the sufficiency, i.e., if~\eqref{empty} is satisfied, then $V^*$ is an orbit-controlling set. We will first provide an algorithm, Algorithm~\ref{orbitalgo}, in which we assign values to the control nodes (i.e., the entries of $x_{V^*}$) along time so that the trajectory generated by the control system, with any given initial condition $x(0)$, will enter the desired periodic orbit $s=y_0\ldots y_{p^*-1}$. The algorithm is comprised of two parts. The first part is from line~2 to line~7, where we always assign ``$1$" to all entries of $x_{V^*}$ until the trajectory enters the periodic orbit $s'=1\ldots1$. 
{\color{black} We note that from a biological perspective, assigning ``$1$" to a vertex~$v_i$ means  providing the product of the corresponding gene~$i$ (usually proteins) to the system. 
Equivalently, the gene~$i$ can be equivalently viewed as at ``on" status in the system.}
The second part is from line~8 to line~11, where we sequentially assign the values from the desired periodic orbit (represented by a binary necklace $y_0\ldots y_{p^*-1}$) to any single vertex in $V^*$. 

\begin{algorithm}
	\caption{Control law for orbit-controlling}\label{orbitalgo}
	\begin{algorithmic}[1]
		\Procedure{Control}{$V^*,s$} 
		\State $t\gets 0$ 
		\While{$x(t)\not=(1,\ldots,1)\in \mathbb{F}^n_2$} 
		\State $x_{V^*}(t)\gets (1,\ldots,1)$
		\State $t\gets t+1$
		\EndWhile\label{euclidendwhile} 
		\State \textbf{end while}
		\State $\tau\gets t$
		\State \textbf{pick any $v_i\in V^*$}
		\For{$t' := 0 \ \textbf{to}\  p^*-1$}
		\State $x_i(\tau+t')\gets y_{p^*-1-t'}$;
		\EndFor
		\State \textbf{end for}	
		\EndProcedure
		\State \textbf{end procedure}
	\end{algorithmic}
\end{algorithm}

{\color{black}
	
	{\em Illustration of Algorithm~\ref{orbitalgo}.} We consider the CBN whose dependency graph is shown in Fig.~\ref{Vbar}. 
	The loop number $p^*$ is~$2$, and hence a periodic orbit of the system is identified with a binary necklace of length~$2$. 
	Suppose that the desired periodic orbit is $s=01$. Then, for the control system on the left of Fig.~\ref{Vbar} with $V^* = \{v_2\}$, the control inputs obtained from Algorithm~\ref{orbitalgo}	are given by
	\begin{center}
		\begin{tabular}{l*{7}{c}r}
			Step $t$          & 0 & 1 & 2 & 3 & 4 & 5 & 6 & 7\\
			\hline
			$x_2(t)$		  & 1 & 1 & 1 & 1 & 1 & 1 & 0 & 1\\
		\end{tabular}
	\end{center}
{\color{black} In this case, $\tau=6$. The system will enter the periodic orbit $s=01$ at time step $(\tau+7)$ as illustrated in Fig.~\ref{3B}.

\begin{figure}[h]
	$\hspace{-0.04cm}\includegraphics[height=49mm]{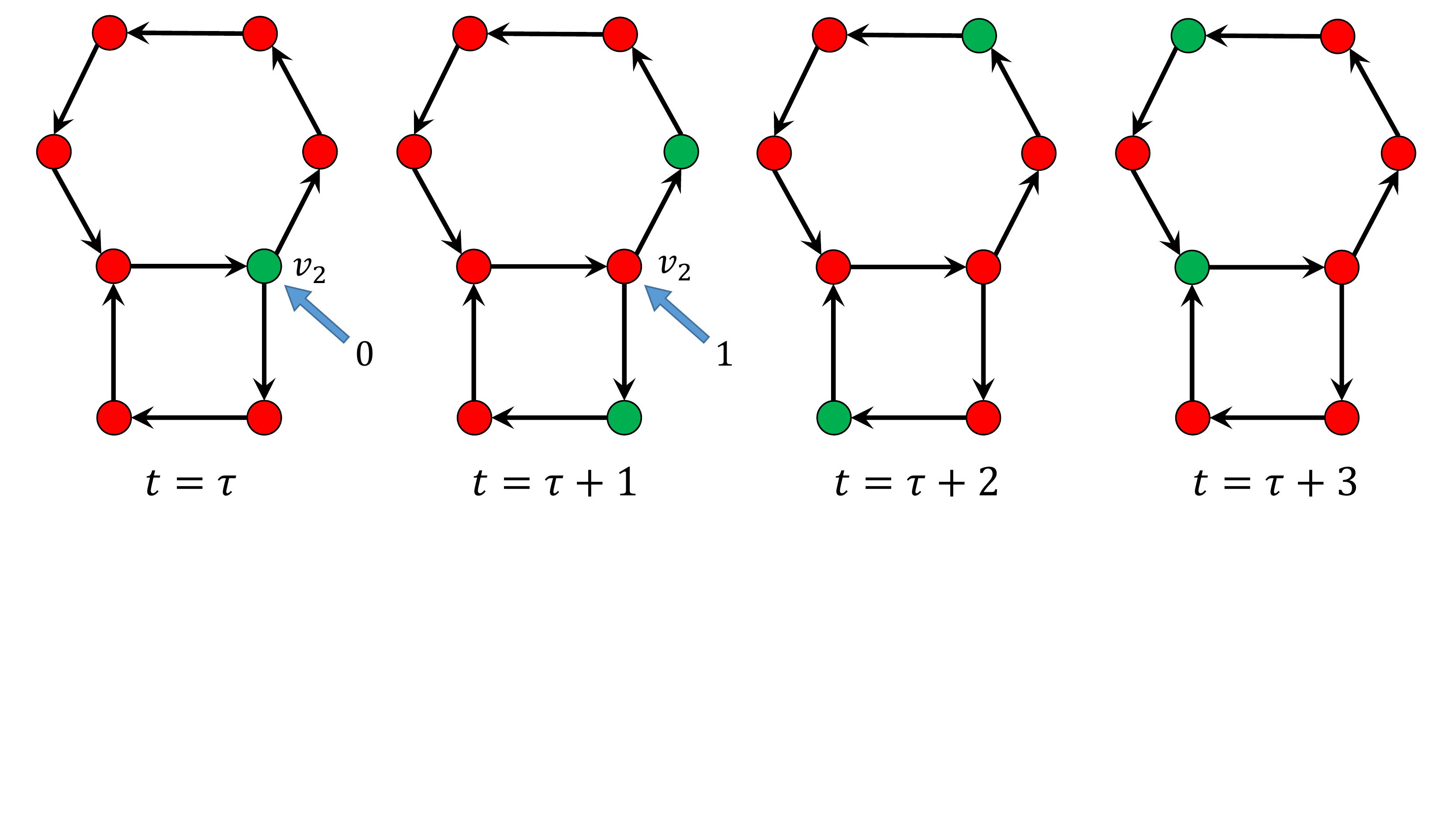}\vspace{-0.2 cm}$
	$\vspace{-1.5 cm}$	
	$\includegraphics[height=49mm]{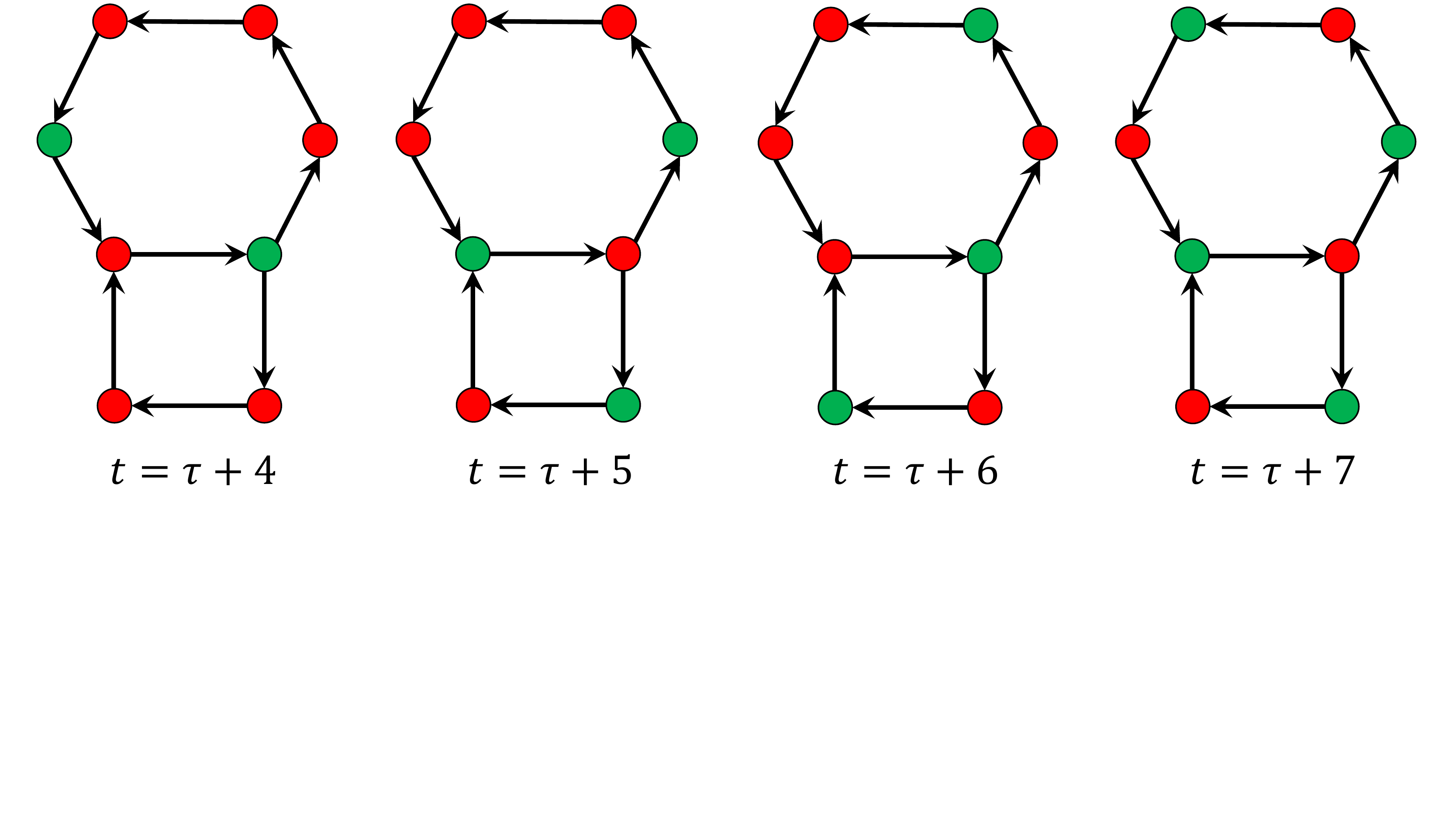}\vspace{-2 cm}$	
	\caption{{\color{black} The above figure illustrates the second part of the control procedure described in Algorithm~\ref{orbitalgo}. Specifically, it shows the system states from $t=\tau$ to $t=\tau+7$. We use the red (resp. green) color to denote that the corresponding node is holding value ``1" (resp. ``0"). We assign to the node $v_2$ the values $0$ and $1$ at the time steps $t= \tau$ and $t = \tau+1$, respectively. With these assignments,  the system will enter the periodic orbit $s=01$ at the time step $t=\tau+7=13$.}}
	\label{3B}
\end{figure}
}

	For the control system on the right of Fig.~\ref{Vbar} with $V^* = \{v_4, v_7\}$, the control inputs are given by
	\begin{center}
		\begin{tabular}{l*{7}{c}r}
			Step $t$          & 0 & 1 & 2 & 3 & 4 & 5 & 6 & 7\\
			\hline
			$x_4(t)$	      & 1 & 1 & 1 & 1 & 1 & 1 & 1 & 1\\
			$x_7(t)$          & 1 & 1 & 1 & 1 & 1 & 1 & 0 & 1\\
		\end{tabular}
	\end{center}
	In either case, the control inputs will drive the system from any initial condition to enter the periodic orbit~$s$. 
}

{\em Validating Algorithm~\ref{orbitalgo}.} 
According to Algorithm~1, the proof of the validity is divided into two parts.   

\paragraph{Part~I: Driving the system to the state $x=(1,\ldots,1)$}\label{part1Alg1}
We show here that the first part of Algorithm~\ref{orbitalgo} (specifically, the ``while" loop) will be terminated in at most~$n$ time steps: 
{\color{black}
\begin{proposition}\label{while}
	If the derived graph $D'$ associated with the control system~\eqref{eq:controlmodel} is acyclic, then by setting $u_i(t) = 1$ for all $v_i \in V^*$ and $0\le t\le n-1$, we have that $x(n-1) = (1,\ldots,1)$. In particular, $\tau \le (n-1)$.
\end{proposition}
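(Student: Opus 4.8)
The plan is to exploit the acyclicity of $D'$ by assigning to each vertex a notion of depth that governs how fast the value $1$ propagates from the control nodes. First I would record the observation made in the Remark that, because $D$ is strongly connected and $D'$ is acyclic, the set of source nodes of $D'$ coincides with $V^*$; these are exactly the vertices held at value $1$ by the choice $u_i(t)=1$. Moreover, since only the in-edges of control nodes are deleted when forming $D'$, every non-control vertex $v_i$ satisfies $\Nin(v_i;D)=\Nin(v_i;D')$, so its update rule $x_i(t)=\prod_{v_j\in\Nin(v_i;D')}x_j(t-1)$ can be read off entirely from $D'$.

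Next I would define a depth function $d:V\to\mathbb{Z}_{\ge 0}$ on the DAG $D'$ by setting $d(v_i)=0$ for each source $v_i\in V^*$ and $d(v_i)=1+\max_{v_j\in\Nin(v_i;D')}d(v_j)$ otherwise. Acyclicity of $D'$ guarantees this recursion is well defined, since tracing in-edges backward from any vertex cannot repeat a node and must therefore terminate at a source. Equivalently, $d(v_i)$ is the length of the longest path in $D'$ from a source to $v_i$.

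The core step is an induction on $d(v_i)$ showing that, with all controls set to $1$, one has $x_i(t)=1$ for every $t\ge d(v_i)$. The base case $d(v_i)=0$ is immediate, as control nodes satisfy $x_i(t)=u_i(t)=1$ for all $t$. For the inductive step, if $d(v_i)=k\ge 1$ then $v_i$ is a non-control node and each in-neighbor $v_j$ obeys $d(v_j)\le k-1$; by the inductive hypothesis $x_j(t-1)=1$ whenever $t-1\ge d(v_j)$, hence whenever $t\ge k$, and the conjunctive (AND) update then forces $x_i(t)=\prod_{v_j\in\Nin(v_i;D')}x_j(t-1)=1$ for all $t\ge k$. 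I would note that this ``for all $t\ge d(v_i)$'' formulation simultaneously encodes the monotonicity that a vertex, once at value $1$, remains at $1$.

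Finally I would bound the depth: any path in a DAG on $n$ vertices has pairwise distinct nodes and hence length at most $n-1$, so $d(v_i)\le n-1$ for every vertex. The induction then yields $x_i(n-1)=1$ for all $i$, i.e. $x(n-1)=(1,\ldots,1)$, and since $\tau$ is by definition the first time the trajectory reaches $(1,\ldots,1)$, it follows that $\tau\le n-1$. The main obstacle, and really the crux of the argument, is recognizing that in a conjunctive network the value $1$ spreads along the \emph{longest} paths from the sources rather than the shortest ones, which is precisely what the depth function $d$ is designed to capture; once $d$ is in place the remaining steps are routine.
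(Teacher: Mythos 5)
Your proof is correct, but it takes a different route from the paper's. The paper argues by contradiction: assuming some $x_i(n-1)=0$, it traces a chain of zero-valued vertices backward in time, $x_i(n-1)=x_{i_1}(n-2)=\cdots=x_{i_{n-1}}(0)=0$, all of which must lie outside $V^*$, and then invokes a pigeonhole argument to find a repeated vertex in this chain, producing a cycle in $D'$ --- a contradiction with acyclicity. You instead give a direct proof: you define the longest-path depth $d(v_i)$ from the sources of $D'$ and show by strong induction on $d$ that $x_i(t)=1$ for all $t\ge d(v_i)$, then bound $d(v_i)\le n-1$. The two arguments are essentially dual (backward propagation of zeros versus forward propagation of ones), and both hinge on the same combinatorial fact that paths in an $n$-vertex DAG have length at most $n-1$. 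Your version buys a few things: it sidesteps the pigeonhole counting (which in the paper implicitly needs the traced chain of $n$ vertices to live in the strictly smaller set $V\setminus V^*$), it yields the sharper conclusion that $x(t)=(1,\ldots,1)$ for every $t\ge L$ where $L$ is the length of the longest path in $D'$ (potentially much smaller than $n-1$), and it makes explicit that the all-ones state persists once reached. The paper's proof, in exchange, is shorter and needs no auxiliary depth function. One cosmetic caveat: your remark that the formulation ``encodes monotonicity'' should not be read as a claim that any vertex at value $1$ stays at $1$ in general (that is false for conjunctive updates); what your induction actually establishes --- permanence of the value $1$ from time $d(v_i)$ onward under the all-ones control --- is exactly the right statement.
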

}

\begin{proof}
	Suppose that, to the contrary, $x(n-1)\neq (1,\ldots,1)$. Without loss of generality, take $x_i(n-1)=0$. Since the value of each control node is fixed to be ``$1$", $v_i\notin V^*$, and hence $\Nin(v_i; D')\neq \varnothing$. By value update rule, there exists a vertex $v_{i_1}\in\Nin(v_i; D')$ with $x_{i_1}(n-2)=0$. Similarly, we have that $v_1\notin V^*$ and there exists a vertex $v_{i_2}\in \Nin(v_1;D')$ with $x_{i_2}(n-3)=0$. Repeating this argument, we find vertices $v_{i_1},\ldots,v_{i_{n-1}} \notin V^*$ such that 
	{\color{black}
	$$
	x_i(n-1)=x_{i_1}(n-2)=\cdots=x_{i_{n-1}}(0)=0,
	$$}On the other hand, there are only $n$ vertices in $D'$. We thus have $v_{i_j}=v_i$ for some $j\in \{1,\ldots,n-1\}$. But then, there is a cycle $v_{i_j}v_{i_{j-1}}\ldots v_{i_1}v_i$ in $D'$ which is a contradiction.  
\end{proof}


\paragraph{Part II: Driving the system from $x=(1,\ldots,1)$ to the periodic orbit $s$}

We show here that after performing the ``for" loop of Algorithm~\ref{orbitalgo}, the trajectory of the system states will enter the periodic orbit $s$.
Recall that $s$ is represented by a binary necklace of length $p^*$: $s=y_0\ldots y_{p^*-1}$. If $s=1\ldots 1$, then we are done by the first part of the Algorithm~1 (lines~2-7). Otherwise, we need to execute the second part of the algorithm (lines~8-11). 
As a result, we provide the following proposition, whose proof is given in the Appendix.

\begin{proposition}\label{initial}
	Fix a vertex $v_i\in V$, and write $s=y_0\ldots y_{p^*-1}$. After executing the control law given in Algorithm~\ref{orbitalgo}, the state $x$ at time $\tau+p^*-1$ is given by
	\begin{equation}\label{beginfor}
	\begin{array}{r@{}l}
	x_{\N^{j}(v_i)}(\tau+p^*-1) &\ =y_{j}{\bf 1} ,\ \forall j = 0,\ldots, p^*-1\\
	x_r(\tau+p^*-1) &\ =1, \hspace{7pt} \forall v_r\notin \cup_{j=0}^{p^*-1}\N^{j}(v_i)
	\end{array}		
	\end{equation}
	where {\bf 1} is a vector of all ones with an appropriate dimension. Moreover, a trajectory generated by the system~\eqref{eq:updaterule}, with the initial condition~\eqref{beginfor}, will enter the periodic orbit $s$ after finite time steps.
\end{proposition}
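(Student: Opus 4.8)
The plan is to prove the two assertions of the proposition separately: first that the control law drives the state at time $\tau+p^*-1$ onto the configuration \eqref{beginfor}, and then that the \emph{uncontrolled} system \eqref{eq:updaterule}, initialized at \eqref{beginfor}, relaxes onto the periodic orbit $s$. By Proposition~\ref{while} I may assume $x(\tau)=(1,\ldots,1)$, so the analysis of the ``for'' loop starts from the all-ones state with only $v_i$ externally forced, running through the values $x_i(\tau+m)=y_{p^*-1-m}$ for $m=0,\ldots,p^*-1$.

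For the first assertion I would track the propagation of the $0$'s. Since every node other than $v_i$ equals $1$ at time $\tau$ and evolves by the conjunctive rule \eqref{xtneighbor}, the only source of a $0$ entering the network is $v_i$; unrolling \eqref{xtneighbor} shows that the value $x_i(\tau+m')$ injected at $v_i$ reaches a node $v_k$ at time $\tau+m$ precisely along the directed walks from $v_i$ to $v_k$ of length $m-m'$, that is, exactly when $v_k\in\N^{m-m'}(v_i)$. Taking $m=p^*-1$ and re-indexing by $j=(p^*-1)-m'$ yields $x_k(\tau+p^*-1)=\prod_{\,j:\,v_k\in\N^{j}(v_i)}y_{j}$. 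To turn this product into \eqref{beginfor} I need two structural facts about a strongly connected digraph of loop number $p^*$: (i) the vertices split into cyclic classes $C_0,\ldots,C_{p^*-1}$ with every edge going from $C_a$ to $C_{(a+1)\bmod p^*}$, whence $\N^{j}(v_i)\subseteq C_{j}$ (taking $v_i\in C_0$) and the sets $\N^{0}(v_i),\ldots,\N^{p^*-1}(v_i)$ are pairwise disjoint, so the product has at most one factor; and (ii) every closed walk through $v_i$ has length a multiple of $p^*$, so that no walk of length $\le p^*-1$ from $v_i$ revisits $v_i$ and the injected values propagate without being overwritten. Together these give $x_k(\tau+p^*-1)=y_{j}$ for the unique $j$ with $v_k\in\N^{j}(v_i)$, and $x_k(\tau+p^*-1)=1$ when $v_k$ lies in no $\N^{j}(v_i)$ with $0\le j\le p^*-1$, which is exactly \eqref{beginfor}.

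For the second assertion, since a CBN is a finite dynamical system its trajectory from \eqref{beginfor} enters some periodic orbit, and by Lemma~\ref{bijec} it suffices to identify the associated necklace. Writing $z$ for the state \eqref{beginfor} and $c(k)$ for the index of the cyclic class containing $v_k$, the unrolled rule gives $x_{k}(\tau+p^*-1+t)=\prod_{v_l\in\Nin^{t}(v_k)}z_l$. Using the cyclic classes again, $\Nin^{t}(v_k)\subseteq C_{(c(k)-t)\bmod p^*}$, and for all large $t$ this set is the \emph{entire} class $C_{(c(k)-t)\bmod p^*}$ (a primitivity statement for the period-$p^*$ quotient of $D$). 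Since each class already contains a vertex carrying the prescribed value $y_{(c(k)-t)\bmod p^*}$, namely a length-$((c(k)-t)\bmod p^*)$ descendant of $v_i$, while the remaining vertices of the class carry $1$, the product collapses to $y_{(c(k)-t)\bmod p^*}$. Reading off a fixed vertex over a window of length $p^*$ therefore reproduces the beads of $s$, so by Lemma~\ref{bijec} the limiting orbit is $s$.

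I expect the two structural facts in the first assertion, the cyclic-class decomposition and the ``no short closed walk through $v_i$'' property, to be the crux: they are what make the product defining $x_k(\tau+p^*-1)$ collapse to a single necklace symbol and thus make \eqref{beginfor} well defined. The analogous technical point in the second assertion is showing that $\Nin^t(v_k)$ becomes a full cyclic class for large $t$. A final delicate point is the \emph{orientation} bookkeeping: because a vertex's value is governed by its in-neighbours whereas the control signal is propagated to its out-neighbours, one must track the index reversal carefully when matching the limiting necklace to $s=y_0\ldots y_{p^*-1}$; this reversal is precisely what the assignment $y_{p^*-1-t'}$ in Algorithm~\ref{orbitalgo} is designed to accommodate.
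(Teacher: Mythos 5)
Your proposal is correct in substance, and its two halves relate to the paper differently. The first half is the paper's own argument in different clothing: your cyclic classes $C_0,\ldots,C_{p^*-1}$ are exactly the irreducible components $U_0,\ldots,U_{p^*-1}$ of the Appendix (together with Lemma~\ref{pro:NinNout}, which gives ``every edge goes from one class to the next''), and your walk-unrolling of the conjunctive rule---made single-valued by the disjointness of the $\N^{j}(v_i)$ and by the fact that every closed walk through $v_i$ has length a multiple of $p^*$---is a non-inductive restatement of the paper's Lemma~\ref{process}, which proves the same propagation statement by induction along the classes. The second half is where you genuinely diverge. The paper passes to the induced dynamics $g_j$ on each component $G_j$ (Proposition~\ref{dynamic}) and argues: if $y_j=0$ then $x_{U_j}(tp^*)$ retains a zero forever, and since $G_j$ is strongly connected and irreducible its periodic orbits are fixed points (a cited result), so $x_{U_j}(tp^*)\to{\bf 0}$; if $y_j=1$ the class stays at ${\bf 1}$; Corollary~\ref{same} then identifies the orbit. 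You instead keep the unrolled product formula and invoke primitivity: $\Nin^{t}(v_k)$ equals a full class for all large $t$, so the product collapses to the minimum value on that class, namely $y_{(c(k)-t)\bmod p^*}$. Both routes rest on the same structural fact (each component is strongly connected with loop number one), but yours trades the cited fixed-point classification for the classical Perron--Frobenius/Wielandt primitivity statement; it is more self-contained in spirit and avoids defining the induced dynamics, at the cost that the primitivity claim is the one ingredient you assert without proof. That claim does need a citation or a short argument, but it follows from the paper's own Appendix (each $G_j$ is strongly connected and irreducible, hence $\Nin^{m}(v;G_j)=U_j$ for all large $m$, and Lemma~\ref{pro:NinNout} converts this to a statement in $D$), so this gap is of the same kind, and the same size, as the paper's own reliance on cited results.

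One further remark: your caution about orientation is warranted, and in fact you are more careful than the paper here. Reading a fixed vertex in time yields the cyclic word $y_0y_{p^*-1}\cdots y_1$, the reversal of $y_0y_1\cdots y_{p^*-1}$, whereas reading across the classes at a fixed time yields $s$ itself; these can be distinct necklaces once $p^*\geq 6$. The paper quietly switches conventions---Lemma~\ref{bijec} identifies orbits by the time reading, while the Appendix and the proof of Proposition~\ref{initial} use the class reading---so the reversed assignment $y_{p^*-1-t'}$ in Algorithm~\ref{orbitalgo} makes the class reading of the limit orbit equal to $s$. Since reversal is a bijection on necklaces, none of this affects orbit-controllability; but if you insist on the time-reading convention of Lemma~\ref{bijec}, your final sentence should conclude that the limit orbit is the reversal of $s$, which is then repaired by relabeling the input necklace.
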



%

\begin{Remark}
	Recall that the ``while" loop takes a maximum of $(n-1)$ time steps, and the ``for" loop takes $p^*$ time steps. Therefore, the maximum total time it takes to control the network is $(n+p^*-1)$. The time it takes for the system to finally enter the periodic orbit, however, can be longer.
\end{Remark}

Combining Proposition~\ref{while} and Proposition~\ref{initial} leads to the sufficiency part of Theorem~\ref{thm1}.


\section{State-controllability}\label{state}

In this section, we investigate the state-controllability of a CBN. {\color{black} We do {\em not} require that the dependency graph $D$ be strongly connected.} The main result of the section is stated as follows:

\begin{theorem}\label{thm2}
	Let $D=(V,E)$ be the dependency graph associated with a conjunctive Boolean network. 
	A subset $V^*\subseteq V$ is a state-controlling set if and only if the associated derived graph $D'$ satisfies the following conditions:
	\begin{enumerate}
		\item The derived subgraph $D'$ is acyclic. \label{item1}
		\item For any $v\in V$, there exists a control node $u\in V^*$ and an integer~$k\ge 0$ such that $\N^k(u; D')=\{v\}$. \label{item2}
	\end{enumerate}\,
\end{theorem}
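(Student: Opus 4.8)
The plan is to prove the two implications separately, and the unifying tool throughout is a reformulation of the conjunctive dynamics in terms of the propagation of ``zeros'' along the derived graph $D'$. Writing $Z(t):=\{v\in V: x_v(t)=0\}$ for the set of nodes holding value $0$ at time $t$, the control model~\eqref{eq:controlmodel}, together with the fact that $D'$ contains no edge into a control node, yields the recursion $Z(t)=\N(Z(t-1);D')\cup I_t$, where $I_t\subseteq V^*$ is the set of control nodes assigned value $0$ at time $t$ and $\N(\,\cdot\,;D')$ is extended to sets by $\N(S;D'):=\cup_{w\in S}\N(w;D')$. Indeed, a non-control node reads $0$ at time $t$ exactly when one of its in-neighbors (the same in $D$ and $D'$) held $0$ at $t-1$, while the value of a control node is whatever we inject. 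Since $\N(\,\cdot\,;D')$ distributes over unions, unrolling this recursion from an all-ones state gives the closed form $Z(T)=\cup_{s}\N^{\,T-s}(I_s;D')$: the terminal set of zeros is exactly the union of the forward ``zero-waves'' launched by the individual injections. This identity is the engine of both directions.

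For necessity I would first treat condition~\eqref{item1}. If $D'$ had a cycle $C$, each node of $C$ is necessarily a non-control node (control nodes have no in-edge in $D'$), so from the initial condition $x(0)=(0,\dots,0)$ the value update rule forces every $v\in C$ to read a $0$ from its cycle-predecessor at each step; hence $x_v(t)=0$ for all $v\in C$ and all $t$, and the final state $(1,\dots,1)$ is unreachable, contradicting state-controllability. For condition~\eqref{item2}, I would start from the all-ones state and target the final state $x^*$ with $x^*_v=0$ and $x^*_w=1$ for $w\neq v$, so that $Z(T)=\{v\}$. The closed form then requires every injected wave $\N^{\,T-s}(u;D')$ to be contained in $\{v\}$ while at least one contains $v$, forcing some control node $u$ and integer $k\ge 0$ with $\N^{k}(u;D')=\{v\}$ (the sub-case $v\in V^*$ being the instance $k=0$). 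This is exactly condition~\eqref{item2} for $v$, so its failure at a single vertex already destroys controllability.

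For sufficiency, assuming both conditions, I would exhibit an explicit two-phase control law. Phase~I drives the system to $(1,\dots,1)$ in at most $n-1$ steps by setting every control input to $1$; this is precisely Proposition~\ref{while}, whose proof needs only the acyclicity of $D'$ and therefore applies here without strong connectivity. Phase~II superposes single-target zero-waves: letting $Z^*=\{v:x^*_v=0\}$, for each $v\in Z^*$ pick via condition~\eqref{item2} a control node $u_v$ and integer $k_v\ge 0$ with $\N^{k_v}(u_v;D')=\{v\}$, inject a $0$ at $u_v$ at time $T-k_v$, keep all control inputs equal to $1$ otherwise, and choose $T$ large enough that all injection times fall inside Phase~II. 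By the closed form, $Z(T)=\cup_{v\in Z^*}\N^{k_v}(u_v;D')=\cup_{v\in Z^*}\{v\}=Z^*$, whence $x(T)=x^*$.

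The step I expect to be the main obstacle is making the superposition rigorous, i.e.\ checking that distinct zero-waves do not interfere to create spurious zeros at the terminal time. The delicate points are: (i) two targets may share a control node, so the required injection times must be consistent, which holds because $\N^{k}(u;D')=\{v\}$ is a singleton and hence two targets forcing a $0$ at the same $u$ at the same step would have to coincide; and (ii) a wave that is wide at intermediate steps must funnel down to its single target at exactly step $k_v$, which is guaranteed by $\N^{k_v}(u_v;D')=\{v\}$ together with acyclicity of $D'$ ruling out any recirculation. Once the union identity $Z(T)=\cup_{s}\N^{\,T-s}(I_s;D')$ is firmly established, both the interference bookkeeping and the two proof directions follow cleanly.
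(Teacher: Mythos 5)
Your proof is correct, and its mathematical core coincides with the paper's: both rest on the observation that a zero injected at a control node $u$ at time $t-k$ forces zeros on \emph{all} of $\N^k(u;D')$ at time $t$, and both necessity arguments use the same witness transfers (all-zeros to all-ones to force acyclicity; all-ones to a single-zero target to force condition~\ref{item2}). Where you genuinely diverge is in the bookkeeping and in the sufficiency construction. Your closed-form identity $Z(T)=\cup_{s}\N^{T-s}(I_s;D')$ is the set-theoretic dual of the paper's ``control expression''~\eqref{sourceprod}, which instead writes $x_i(t)$ as a product of control inputs over all paths from control nodes to $v_i$; the two are equivalent, but your union-of-waves form makes the superposition and non-interference checks in the sufficiency direction essentially automatic, and it yields a self-contained proof of the necessity of condition~\ref{item1} that does not lean on Theorem~\ref{thm1} --- a real advantage, since Theorem~\ref{thm1} is stated only for strongly connected $D$ while Theorem~\ref{thm2} drops that hypothesis, so the paper's one-line reduction (``state-controlling implies orbit-controlling'') is looser than your direct cycle argument. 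On sufficiency, your law is two-phase (reset to $(1,\ldots,1)$ via Proposition~\ref{while}, then superpose single-target waves), whereas the paper's Algorithm~\ref{statealgo} is one-shot: under the two conditions every source of $D'$ is a control node, so $x(T)$ with $T$ the longest path length in $D'$ depends only on the inputs and not on the initial state, and the paper reaches $x^*$ in exactly $T$ steps without ever passing through the all-ones state; your law needs up to $(n-1)+\max_v k_v$ steps but makes correctness transparent, and your resolution of the shared-control-node clash (identical injection time forces identical target, by the singleton property) is sound. One small point to tighten: Proposition~\ref{while} as written in the paper uses strong connectivity to conclude that every non-control node has an in-neighbor in $D'$; in your setting this follows either from condition~\ref{item2} (a non-control source $v$ of $D'$ would have $\N^k(u;D')\neq\{v\}$ for every $u,k$) or from the convention in~\eqref{eq:updaterule} that an empty product equals $1$, and you should say which, rather than attribute it to acyclicity alone.
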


Note that  the first item of Theorem~\ref{thm2} is itself a necessary and sufficient condition for $V^*$ to be an orbit-controlling set. 
The second item is thus a necessary and sufficient condition for an orbit-controlling set to be a state-controlling set.

{\color{black}
{\em Illustration of Theorem~\ref{thm2}.} We consider again the example shown in Fig.~\ref{Vbar}, where we have a CBN with two different sets of control nodes. Recall that the associated derived graphs are acyclic in both cases (given in Fig.~\ref{DAG}). Thus, the two sets of control nodes are both orbit-controlling sets.} However, only the control nodes on the right of Fig.~\ref{Vbar} form a state-controlling set. Indeed, we have 
$$
\begin{array}{l}
\vspace{0.1 cm}
\N^2(v_7) = \N^3(v_4) =\{v_1\}, \\
\vspace{0.1 cm}
\N^3(v_7) = \N^4(v_4) =\{v_2\},  \\
\vspace{0.1 cm}
\N^4(v_7) = \N^5(v_4) =\{v_3\}, \\
\vspace{0.1 cm}
\N^0(v_4) = \{v_4\},  \\
\vspace{0.1 cm}
\N^1(v_4) = \{v_5\},  \\
\vspace{0.1 cm}
\N^2(v_4) = \{v_6\}, \\
\vspace{0.1 cm}
\N^0(v_7) = \{v_7\}, \\
\vspace{0.1 cm}
\N^1(v_7) =  \{v_8\}, 
\end{array}
$$  
where all the out-neighbors are taken within $D'$. Thus, the second condition of Theorem~\ref{thm2} is satisfied. {\color{black} On the other hand, 
the set of control nodes on the left of {\color{black}Fig.~\ref{Vbar}} is {\em not} a state controlling set. To see this, we note that the node $v_4$ of the left DAG only lies in $\N^2(v_2)$, but $\N^2(v_2)=\{v_4,v_8\}\neq\{v_4\}$}, and hence the second condition of Theorem~\ref{thm2} is not satisfied.


We prove in the remainder of this section Theorem~\ref{thm2}. 
The necessity and sufficiency of the two conditions listed in Theorem~\ref{thm2} are established subsequently in the following subsections.

\subsection{Necessity}
We prove here the necessity part of Theorem~\ref{thm2}. Specifically, we show that if $V^*$ is a state-controlling set, then the two conditions in Theorem~\ref{thm2} must hold. The necessity of the first condition should be clear as a state-controlling set is necessarily an orbit-controlling set. 


We establish below the necessity of the second condition. The proof will be carried out by contradiction. Specifically, we assume that the derived graph $D'$ is a DAG which does not satisfy the second item in Theorem~\ref{thm2}. We then show that system~\eqref{eq:controlmodel} is not controllable. 
To proceed, we first have some preliminaries on the control dynamics~\eqref{eq:controlmodel}. From~\eqref{xtneighbor}, we have that for any $v_i\notin V^*$,
{\color{black}
$$
x_i(t) =  \prod_{v_j\in \mathcal{N}_{\rm in}(v_i; D')}x_j(t-1).
$$
For each $v_j\in \Nin(v_i;D')$, we have two cases: If $v_j$ is a control node, then we keep the factor $x_j(t-1)$ in~\eqref{xtneighbor}. If $v_j$ is not a control node, then $v_j$ has a nonempty set of incoming neighbors. We can thus appeal again to~\eqref{xtneighbor} and replace the factor $x_j(t-1)$ in~\eqref{xtneighbor} with the following expression: 
$$
x_j(t-1) = \prod_{v_k\in \mathcal{N}_{\rm in}(v_j;D')}x_k(t-2).$$}Since $D'$ is a DAG, by recursively applying the arguments above, we obtain that 
\begin{eqnarray}\label{sourceprod}
x_i(t)= \prod_{v_j\in V^*_i} \prod_{p \in P_{ji}} x_j(t-l(p)),
\end{eqnarray}
where $V^*_i\subseteq V^*$ is a subset of the set of source nodes such that there is at least one path from $v_j$ to $v_i$ for all $v_j\in V^*_i$. We recall that $P_{ji}$ is the set of paths (within $D'$) from $v_j$ to $v_i$ and $l(p)$ is the length of path $p$. Since the nodes $v_j$'s in~\eqref{sourceprod} are the control nodes of $D'$, we call~\eqref{sourceprod} the {\bf control expression} of $x_i(t)$. In Fig.~\ref{expression}, we provide an example where we write the values of all nodes in their control expression form.
\begin{figure}[h]
	\centering
	\includegraphics[height=50mm]{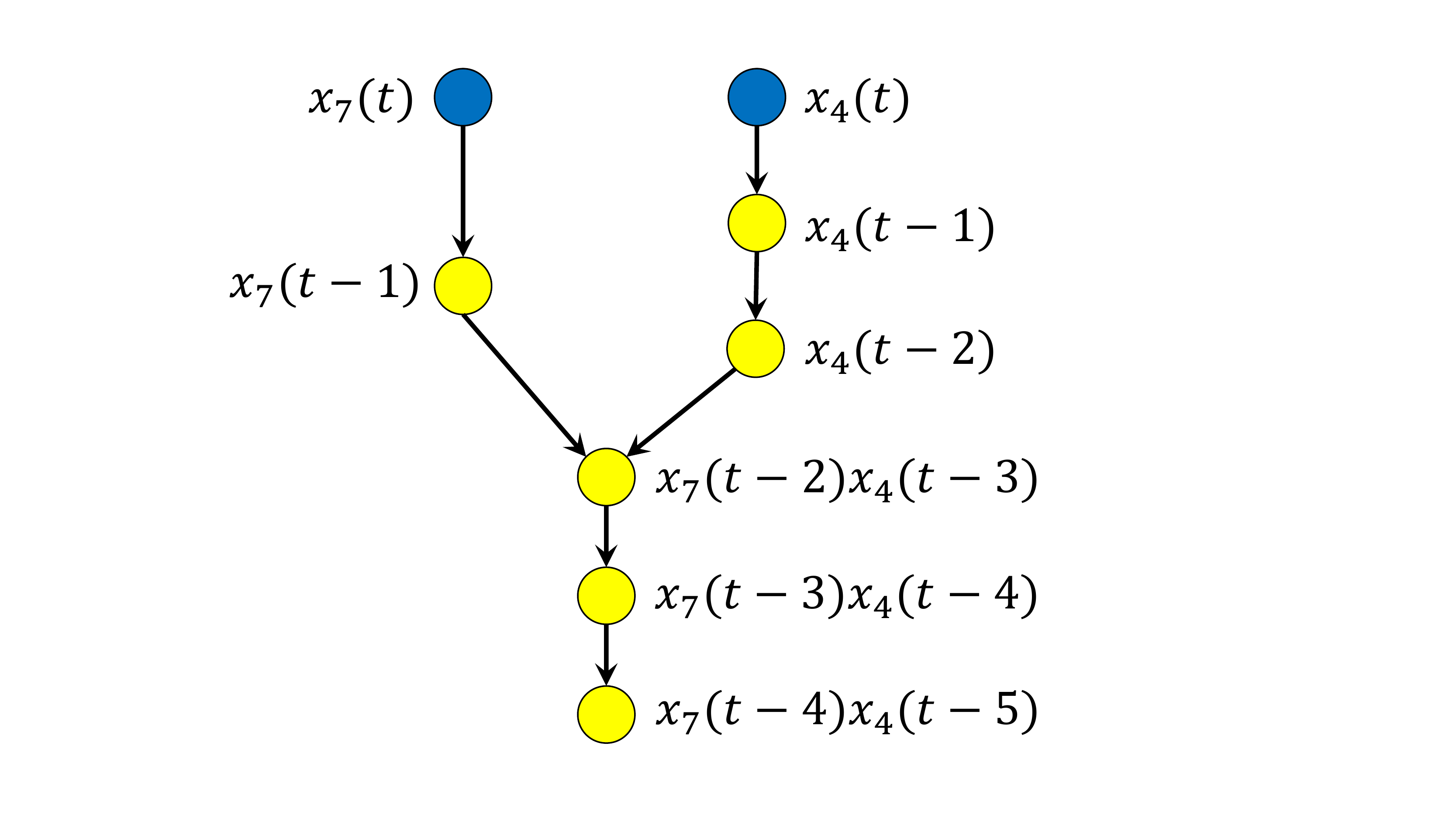}
	\caption{The DAG in this figure is the derived graph of the dependency graph shown on the right of Fig.~\ref{Vbar}. The two nodes $v_7$ and $v_4$ (marked in blue) form a state-controlling set. The values of all nodes at time $t$ are expressed in their control expression form.}
	\label{expression}
\end{figure}

With the preliminaries above, we are now in a position to prove the necessity of the second condition of Theorem~\ref{thm2}.

\begin{proof}[Proof of necessity of condition~\ref{item2}]
	Let $v_i\in V$ be a node such that $\N^k(u;D')\neq \{v_i\}$ for any $u\in V^*$ and any $k\geq 0$. We now show that system~\eqref{eq:controlmodel} cannot be driven from an initial state $(1,\ldots,1)$ 
	to the final state $x^*$ where $x^*_i = 0$ and $x^*_s= 1$ for all $v_s \neq v_i$. The proof is carried out by contradiction, i.e., we assume that there is a set of  control laws using which we can steer the system to reach $x(t) = x^*$ for some~$t \ge 0$.

	We let the control expression of $x^*_i(t)$ be given by~\eqref{sourceprod}. 
	We then pick an arbitrary factor in~\eqref{sourceprod}, say $x_j(t-l(p_1))$, with $v_j\in V^*_i\cap \Nin^{l(p_1)}(v_i;D')$. By assumption, we have $\N^{l(p_1)}(v_j;D')\neq \{v_i\}$. Thus, there exists a node $v_s$, other than $v_i$, such that $v_s\in \N^{l(p_1)}(v_j;D')$. We then apply the control expression to $x^*_s$. 
	Note, in particular, that the factor $x_j(t-l(p_1))$ we picked in the control expression of $x^*_i(t)$ is also a factor in the control expression of $x^*_s(t)$.  Moreover, since $v_s \neq v_i$ and $x^*_s(t)=1$, it is necessary that  $x_j(t-l(p_1))=1$. Since the factor $x_j(t-l(p_1))$ in the control expression of $x^*_i(t)$ is picked arbitrarily, it is necessary that any such factor holds value ``$1$". Thus, $x^*_i(t) = 1$, which is a contradiction. This completes the proof. 
\end{proof}

\subsection{Sufficiency}

We next prove the sufficiency part of Theorem~\ref{thm2}. Specifically, we show that if $V^*\subseteq V$ satisfies the two conditions listed in Theorem~\ref{thm2}, then $V^*$ is a state-controlling set. The proof will be carried out by exhibiting an explicit control law for steering the system from an arbitrary initial condition  to  the desired final state $x^*$. To the end, let $T$ be the length of a longest path in the derived graph $D'$. The following algorithm assigns the values to $x_{V^*}(t)$, for $0\le t \le T$, such that the trajectory generated by the control system~\eqref{eq:controlmodel}, from an arbitrary initial condition, reaches $x^*$ at time $T$. 

\begin{algorithm}[h]
	\caption{Control law for state-controlling}\label{statealgo}
	\begin{algorithmic}[1]
		\Procedure{Control}{$V^*,x^*$} 
		\State $T\gets$ length of the longest path in $D'$
		\For{$t := 0 \ \textbf{to}\  T$}
		\For{$v_i\in V^*$}
		\If{$|\N^{T-t}(v_i; D')|==1$}
		\If	{$x^*_{\N^{T-t}(v_i;D')}==0$}
		\vspace{0.1cm}
		\State $u_i(t)\gets 0$
		\State \textbf{continue}
		\EndIf
		\State \textbf{end if}
		\EndIf
		\State \textbf{end if}
		\State $u_i(t)\gets 1$
		\EndFor
		\State \textbf{end for}
		\EndFor
		\State \textbf{end for}
		\EndProcedure
		\State \textbf{end procedure}
	\end{algorithmic}
\end{algorithm}

The assignment of Algorithm~\ref{statealgo} can be interpreted as follows: At time step~$t$ and for each control node $v_i\in V^*$, there are two cases:  If there exists a node $v_j\in V$ such that $\N^{T-t}(v_i)=\{v_j\}$ and $x^*_j=0$, then we let $u_i(t)=0$. Otherwise, we let $u_i(t)=1$. We also note that the values of control nodes assigned by the algorithm above do {\em not} depend on the initial condition. 
%

{\em Illustration of Algorithm~\ref{statealgo}.} We consider the CBN whose dependency graph (resp. derived graph) is shown on the right of Fig.~\ref{Vbar} (resp. Fig.~\ref{DAG}). 
Suppose that the desired final state is $x^*=\{x^*_1,\ldots,x^*_8\}=\{1,1,0,0,0,1,0,1\}$; then, the control inputs for $x_4$ and $x_7$ obtained from Algorithm~\ref{statealgo} are given by:
	\begin{center}
		\begin{tabular}{l*{6}{c}r}
			Step $t$          & 0 & 1 & 2 & 3 & 4 & 5  \\
			\hline
			$x_4(t)$			  & 0 & 1 & 1 & 1 & 0 & 0   \\
			$x_7(t)$             & 1 & 0 & 1 & 1 & 1 & 0   \\
		\end{tabular}
	\end{center}\,
{\color{black} With these inputs, the system will enter the state $x^*=\{x^*_1,\ldots,x^*_8\}=\{1,1,0,0,0,1,0,1\}$ at time step $t=5$ as illustrated in Fig.~\ref{4B}.
	
	\begin{figure}[h]
		$\hspace{-0.04cm}\includegraphics[height=49mm]{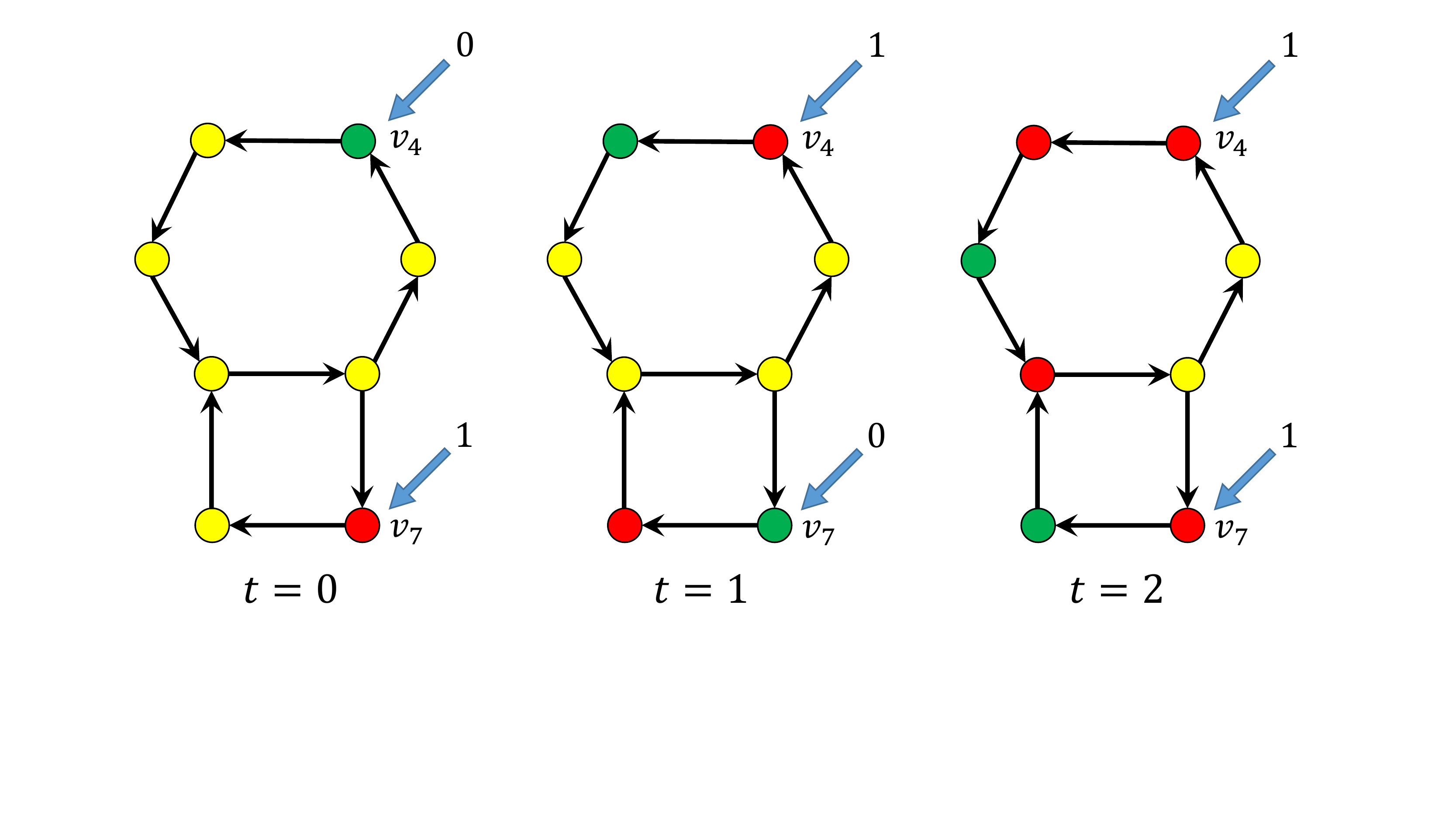}$
		$\vspace{-1.2 cm}$	
		$\includegraphics[height=49mm]{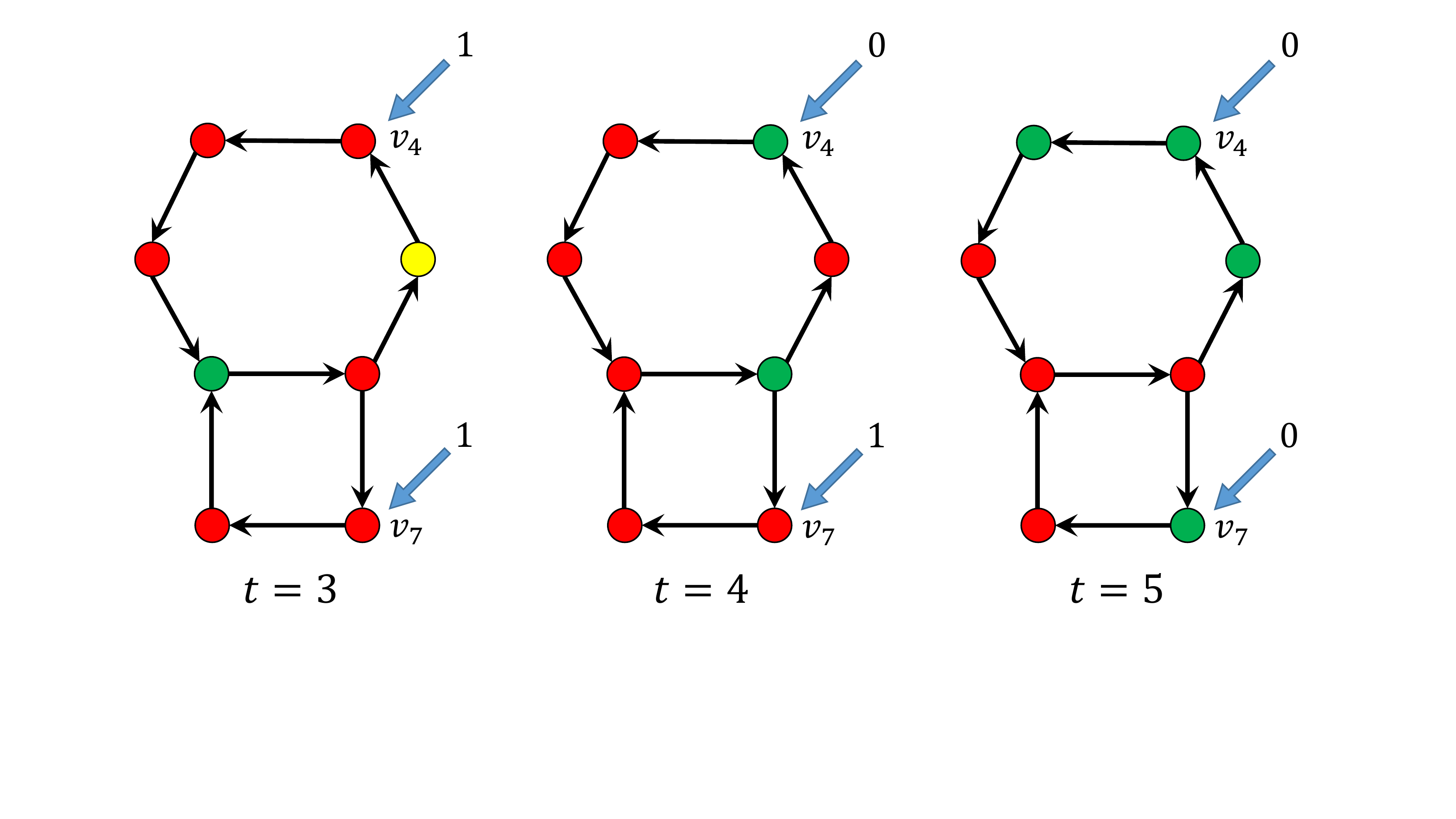}\vspace{-1.2 cm}$	
		\caption{{\color{black} The above figure illustrates the control procedure described in Algorithm~\ref{statealgo}. Specifically, it shows the system states from $t=0$ to $t=5$. We use the red (resp. green) color to denote that the corresponding node is holding value ``1" (resp. ``0"). Vertices are colored yellow if their values are irrelevant, i.e., their values do not affect the control procedure. We assign to the nodes $v_4$ and $v_7$ at the time steps $t= 0$ to $t = 5$. With these assignments, the system will enter the state $x^*=\{1,1,0,0,0,1,0,1\}$ at the time step $t=5$.}}
		\label{4B}
	\end{figure}
}

%

{\em Validating Algorithm~\ref{statealgo}.} 
We show below that for any $v_j\in V$, the Algorithm~\ref{statealgo} leads to $x_j(T)=x_j^*$. There are two cases.
	
	\emph{Case I:} $x_j^*=0$.
	If $v_j\in V^*$, then $\N^{T-T}(v_j)=\{v_j\}$, and both ``if" conditions in Algorithm~\ref{statealgo} are satisfied. Thus, we have that $$x_j(T)=u_j(T)=0.$$  
	If $v_j\notin V^*$, then by the second condition in Theorem~\ref{thm2}, there exists a control node $v_i\in V^*$ and an integer $k$, with $0<k\le T$, such that $\N^k(v_i;D')=\{v_j\}$. At time $t=T-k$, we have that $|\N^{T-t}(v_i)|=1$. Both ``if" conditions in Algorithm~\ref{statealgo} are satisfied. Thus, $$x_i(T-k)=u_i(T-k)=0.$$ Also, $\N^k(v_i;D')=\{v_j\}$ indicates that there is a path (within $D'$) of length $k$ from $v_i$ to $v_j$. Appealing to~\eqref{sourceprod}, we obtain that $x_i(T-k)$ is a factor of the control expression of $x_j(T)$, which leads to $$x_j(T)=x_i(T-k)=0.$$
	
	\emph{Case II:} $x_j^*=1$. 
	From the control expression~\eqref{sourceprod}, we obtain
	$$
	x_j(T)= \prod_{v_i\in V^*_j} \prod_{p\in P_{ij}} x_i(T-l(p)).
	$$
	Note that $l(p)\le T$ because $T$ is the length of a longest path in $D'$. 
	It now suffices to show that each factor $x_i(T-l(p))$ above is assigned the value ``$1$" under Algorithm~\ref{statealgo}. Note that there is a path of length $l(p)$ from $v_i$ to $v_j$, i.e., $v_j\in\N^{l(p)}(v_i;D')$. If $|\N^{T-(T-l(p))}(v_i)|\neq 1$, then the ``if" condition in line~5 of Algorithm~\ref{statealgo} is not satisfied. Thus, by the value assignment rule in line~11, we have that $$x_i(T-l(p))=u_i(T-l(p))=1.$$ 
	If $\N^{T-(T-l(p))}(v_i;D')= \{v_j\}$, then the ``if" condition in line~5 of Algorithm~\ref{statealgo} is satisfied. However, since $x_j^*=1$, the ``if" condition in line~6 is not satisfied. Thus, by the value assignment rule in line~11, we again have that $$x_i(T-l(p))=u_i(T-l(p))=1.$$
This then establishes the validity of Algorithm~\ref{statealgo}. We thus complete the proof of Theorem~\ref{thm2}. \hfill{\qed}



\section{Conclusions and Outlooks} \label{end}

In this paper, we have posed and answered the following two-part controllability question: Given a subset of nodes of the dependency graph, what are the necessary and sufficient conditions for a subset to be an orbit-controlling set or a state-controlling set? The answers were given in Theorem~\ref{thm1} and Theorem~\ref{thm2}. In particular, we related the orbit-controllability as well as controllability of system~\eqref{eq:controlmodel} to the structure of the derived graph. We have also presented, in Algorithm~\ref{orbitalgo} (resp. Algorithm~\ref{statealgo}), a method of assigning the values of the control inputs to steer system~\eqref{eq:controlmodel} to a desired periodic orbit (resp. final state). Algorithm~\ref{orbitalgo} takes at most $(n+p^*-1)$ time steps, with $n$ being the number of vertices in the dependency graph and $p^*$ the greatest common divisor of cycle lengths. Algorithm~\ref{statealgo} takes at most $T$ time steps, with $T$ being the length of the longest path in the derived graph.   

Although systems biology serves as the main motivation for our research,  applications of this work are by far not limited to gene regulated networks. CBNs are also suitable to model, for example, water quality networks. In such a network, each Boolean variable can be viewed as the water quality within a pipe. The Boolean variable takes the value ``1" if the water is not polluted, and the value ``0" if the water is polluted. The water in each pipe comes from some other pipes, and is polluted if the water in one of those other pipes was polluted. Other examples which can be modeled by CBNs include social networks (information flow on Twitter or Facebook), and supply chain networks (movement of materials), and the results of this paper would also apply to all these networks.

{\color{black}
There are several research directions we will pursue in our future work. First, recall that in the study of orbit-controlling sets, we considered only CBNs whose dependency graphs are strongly connected, because the periodic orbits of CBNs with weakly connected dependency graphs have not yet been fully characterized. Most recently, we have made some progress in this direction in~\cite{chen2017asymptotic}, where we have investigated the asymptotic behavior of weakly connected CBNs. We plan to generalize the result of orbit-controllability obtained in this paper to a general weakly connected dependency graph. 

{\color{black}Second, we plan to develop algorithms for 1) finding all orbit- and state-controlling sets of a CBN; 2) finding an orbit-controlling set and/or a state-controlling set with minimal cardinality.}
We note that finding the orbit-controlling set with minimum cardinality is in fact equivalent to finding the minimum cardinality of the so-called {\bf feedback vertex set}, the set of vertices (nodes) whose removal leads to DAG. This problem has been shown to be NP-hard for general graphs in~\cite{karp1972reducibility}, and it has been shown in~\cite{fomin_et_al:LIPIcs:2010:2470} that finding a minimum feedback vertex set of general undirected graphs with $n$ nodes can be solved in time $\mathcal{O}(1.7347^n)$. For general directed graphs, an algorithm has been provided in~\cite{razgon2007computing}, solving the problem in time $\mathcal{O}(1.9977^n)$. A faster algorithm for finding the minimum feedback vertex set in strongly connected graphs may be developed in the future. An algorithm for finding the minimum state-controlling set may be developed as well. 

Third, we plan to explore the tradeoff between the number of control nodes and the time it takes for the system to reach a desired state (or an periodic orbit). Controllability issues on other types of Boolean networks would also be of interest.
}

\bibliographystyle{IEEEtran}
\bibliography{paperrefs[4]}

\section*{Appendix}

This Appendix is organized into two subsections. In the first subsection, we provide some preliminary results that are necessary for proving Proposition~\ref{initial}. In the second subsection, we provide the analysis and proof for Proposition~\ref{initial}.

\subsection{Irreducible components of strongly connected graphs}\label{borrow}
We introduce here a tool that we built in~\cite{stabilityfull}: decomposing the dependency graph into several irreducible components. Similar decompositions have also been studied in~\cite{chen2011cluster,han2013cluster}. Proofs of these results can be found in~\cite{stabilityfull}. 


\subsubsection{Irreducible components}\label{irrcomp}

In this sub-subsection, we first construct $p^*$ digraphs, as we call the irreducible components of $D$. Then, we define a CBN, as we call an induced dynamics, on each irreducible component. We further present the relationships between the original dynamics and the $p^*$ induced dynamics.  To proceed, we introduce some definitions.

\begin{definition}\label{rel}
	Let $p$ divide the lengths of cycles of the dependency graph~$D$. We say that a vertex $v_i$ is {\bf related to} $v_j$ (or simply write $v_i\sim_p v_j$) if there exists a walk $w_{ij}$ from $v_i$ to $v_j$ such that $p$ divides $l(w_{ij})$.
\end{definition}
We note here that the relation introduced in Definition~\ref{rel} is in fact an \emph{equivalence relation}. We then construct  a subset of $V$ as follows: First, choose an arbitrary vertex $v_i$ as a base vertex; then, define
\begin{equation}\label{eq:defWpi}
[v_i]_p:=\{v_j\in V\mid v_j\sim_p v_i\}.
\end{equation}

Note that 
the subset $[v_i]_p$, for any $v_i\in V$, is an \emph{equivalence class} of $v_i$. 
We further have the following result: 

\begin{definition}
	Let $D = (V,E)$ be the dependency graph associated with a CBN. The digraph $D$ is said to be {\bf irreducible} if $p^*=1$.
\end{definition}
If the digraph $D$ is not irreducible, then there is a decomposition of $D$ into $p^*$ components each of which is irreducible~\cite{stabilityfull}. 
This decomposition can be described as follows: First, picking an arbitrary vertex $v_0$, we obtain a subset $[v_0]_{p^*}$  via~\eqref{eq:defWpi}. For ease of notation, we will write $[v_0]$ instead of $[v_0]_{p}$ if  $p = p^*$. 
Now, picking vertices $v_1\in \N(v_0),\ldots, v_{p^*-1} \in \N(v_{p^*-2})$, we obtain subsets $[v_1],\ldots, [v_{p^*-1}]$. It turns out that these subsets form a partition of $V$~\cite{stabilityfull}. An example of such a partition is provided in Fig.~\ref{P}. 
\begin{figure}[h]
	\centering
	\includegraphics[height=60mm]{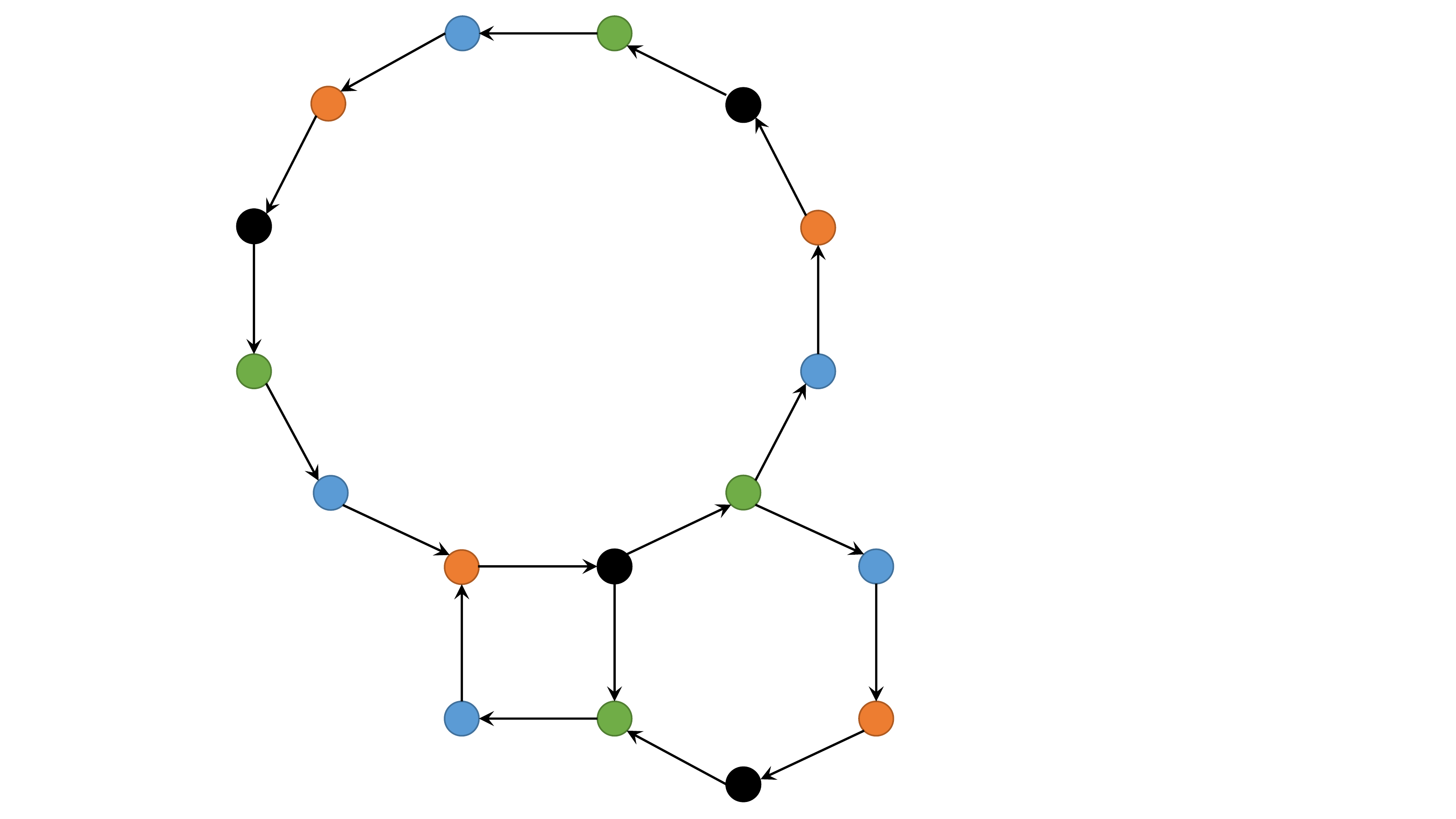}
	\caption{The digraph in the figure has three cycles whose lengths are $4$,$8$, and $12$, respectively. Let $p = 4$ be a common divisor of the cycle lengths. Then, the associated partition yields $4$ disjoint subsets, with the vertices of the same color belong to the same subset.  
	}
	\label{P}
\end{figure}

We then have the following definition.

\begin{definition}[Irreducible components]\label{Def:IrrComp}
	Let $D = (V, E)$ be a strongly connected digraph, and $p^*$ be its loop number. Let the subsets  $[v_0],\ldots, [v_{p^*-1}]$ form a partition of $V$. The {\bf irreducible components} of $D$ are digraphs {\color{black} $G_0 = (U_0,F_0),\ldots, G_{p^*-1} = (U_{p^*-1}, F_{p^*-1})$,} with their vertex sets $U_k$'s given by
	$$
	U_k := [v_k], \hspace{10pt} \forall k = 0,\ldots, p^*-1.
	$$
	The edge set $F_k$ of $G_k$ is determined as follows: Let $u_i$ and $u_j$ be two vertices of $G_k$. Then, $u_iu_j$ is an edge of $G_k$ if there is a walk $w_{ij}$ from $u_i$ to $u_j$ in $D$ with $l(w_{ij}) = p^*$. 
\end{definition}

We provide an example in Fig.~\ref{components} in which we show the irreducible components of the digraph shown in Fig.~\ref{P}.

\begin{figure}[h]
	\centering
	\includegraphics[height=60mm]{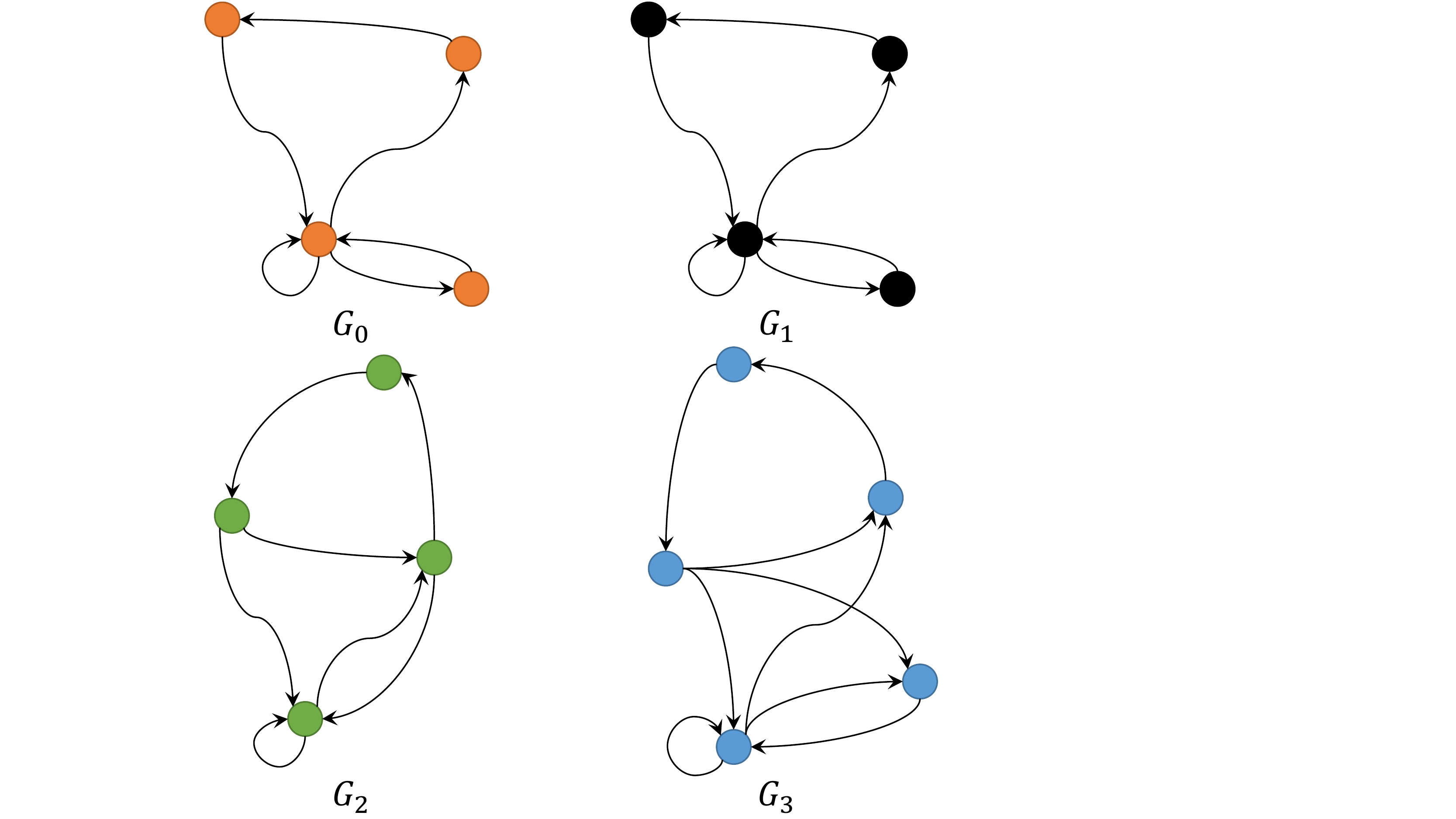}
	\caption{Irreducible components of the digraph shown in Fig.~\ref{P}.}
	\label{components}
\end{figure}

It can be shown that each irreducible component $G_k$, $k=0,\ldots,p^*-1$, is strongly connected and irreducible~\cite{stabilityfull}.

Given a subset $V'$ of $V$ and a nonnegative integer $p$, we define a subset $\Nin^p(V')$ by induction: For $p = 0$, let $\Nin^0(V') := V'$; for $p \ge 1$, we define 
\begin{equation}\label{eq:defninp}
\Nin^p(V') := \cup_{v_j\in \Nin^{p-1}(V')} \Nin(v_j). 
\end{equation}
Similarly, we define $\N^p(V')$ by replacing $\Nin$ with $\N$ in~\eqref{eq:defninp}. With the notations above, we have the following result about the relationships between the vertex sets of the irreducible components:

\begin{lemma}\label{pro:NinNout}
	For $k\ge 0$, we have 
	$$
	\left\{
	\begin{array}{l}
	\N^k(U_0) = U_{(k \bmod p^*)}, \vspace{3pt}\\
	\Nin^k(U_0) = U_{(-k \bmod p^*)}.
	\end{array}
	\right.
	$$\,
\end{lemma}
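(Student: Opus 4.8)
The plan is to reduce both identities to a single structural fact: every edge of $D$ carries a vertex of $U_a$ to a vertex of $U_{(a+1)\bmod p^*}$. Once this ``cyclic layering'' of the partition $U_0,\ldots,U_{p^*-1}$ is in hand, both identities follow by a short induction on $k$. To establish the layering, I would introduce a residue (potential) function $c\colon V\to\{0,\ldots,p^*-1\}$ by letting $c(v)$ be the length, reduced modulo $p^*$, of any walk from the base vertex $v_0$ to $v$; such a walk always exists since $D$ is strongly connected. The heart of the argument is that $c$ is well defined and that $U_a=\{v\in V\mid c(v)=a\}$.

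Well-definedness of $c$ is where the main work lies. Suppose $w_1,w_2$ are two walks from $v_0$ to $v$, and pick, using strong connectivity, a walk $w'$ from $v$ back to $v_0$. Then $w_1w'$ and $w_2w'$ are closed walks at $v_0$, and a closed walk decomposes into cycles, so its length is a sum of cycle lengths. Since $p^*=\gcd\{n_1,\ldots,n_N\}$ divides every cycle length, $p^*$ divides $l(w_1w')$ and $l(w_2w')$, whence $l(w_1)\equiv l(w_2)\pmod{p^*}$, giving well-definedness. To identify $U_a$ with $\{v\mid c(v)=a\}$, I would use that the construction chooses $v_k\in\N(v_{k-1})$, so the path $v_0v_1\cdots v_a$ witnesses $c(v_a)=a$; then $v\sim_{p^*}v_a$ (Definition~\ref{rel}) holds if and only if there is a walk between $v$ and $v_a$ of length divisible by $p^*$, which, concatenated with the length-$a$ walk from $v_0$ to $v_a$ and handled again by closed-walk divisibility, is equivalent to $c(v)=a$. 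Thus $U_a=[v_a]=\{v\mid c(v)=a\}$.

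With $c$ available, the layering is immediate: if $v_iv_j\in E$ and $v_i\in U_a$, then appending the edge $v_iv_j$ to a walk from $v_0$ to $v_i$ gives $c(v_j)=(a+1)\bmod p^*$, so $v_j\in U_{(a+1)\bmod p^*}$. I would then prove $\N(U_a)=U_{(a+1)\bmod p^*}$ in two inclusions: the forward inclusion is exactly the layering, and the reverse uses strong connectivity, since every vertex of $U_{(a+1)\bmod p^*}$ has at least one in-neighbor, which by layering must lie in $U_a$. Starting from $\N^0(U_0)=U_0$ and iterating $\N(U_a)=U_{(a+1)\bmod p^*}$ yields $\N^k(U_0)=U_{(k\bmod p^*)}$ by induction on $k$. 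The in-neighbor identity is entirely symmetric: the same layering gives $\Nin(U_a)=U_{(a-1)\bmod p^*}$ (forward inclusion from the layering read backwards, reverse inclusion from strong connectivity via out-neighbors), and induction then gives $\Nin^k(U_0)=U_{(-k\bmod p^*)}$.

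The step I expect to be the main obstacle is the well-definedness of $c$, i.e.\ the claim that every closed walk in $D$ has length divisible by $p^*$. The decomposition of a closed walk into cycles is standard but must be stated carefully, since a closed walk need not be a single cycle and repeated vertices force an iterative peeling of cycles. Much of this machinery---together with the facts that the $U_k$ partition $V$ and that each irreducible component is strongly connected---is already available from~\cite{stabilityfull}; if the closed-walk divisibility is among the borrowed results, then the present lemma becomes a direct corollary of the layering property, and the two inductions are routine.
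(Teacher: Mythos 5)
Your proposal is correct, but there is essentially nothing in this paper to compare it against: the paper states Lemma~\ref{pro:NinNout} without proof and explicitly defers to~\cite{stabilityfull} (``Proofs of these results can be found in~\cite{stabilityfull}''). What you have written is therefore a self-contained argument supplying what the paper only cites. Checking it: the crux is indeed the closed-walk divisibility claim, and your cycle-peeling argument for it is sound---any closed walk of positive length in a digraph decomposes by iteratively excising a cycle each time a vertex repeats, every excised cycle is a cycle of $D$, and $p^*$ divides each cycle length by its definition as the gcd, hence divides the walk length. From there the residue function $c$ is well defined, the identification $U_a=\{v\mid c(v)=a\}$ goes through exactly as you sketch (the direction of the walk in Definition~\ref{rel} is immaterial because $\sim_{p^*}$ is symmetric, as the paper records), and the two inclusions for $\N(U_a)=U_{(a+1)\bmod p^*}$ and $\Nin(U_a)=U_{(a-1)\bmod p^*}$ work as you say, using that strong connectivity guarantees every vertex has both an in-neighbor and an out-neighbor (the paper's remark after Theorem~\ref{thm1} notes the absence of source nodes). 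The inductions on $k$ are then routine. One feature of your route that you undersell: since $c$ is a well-defined function on $V$ and $U_a=c^{-1}(a)$, the fact that $U_0,\ldots,U_{p^*-1}$ are pairwise disjoint and cover $V$ falls out of your construction for free, so---contrary to the caveat in your last paragraph---your proof need not borrow the partition property or the closed-walk divisibility from~\cite{stabilityfull}, and it has no circular dependence on the cited results.
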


\subsubsection{Induced dynamics}\label{def:indyn}
Let $f = (f_1,\ldots, f_n)$ be a CBN, and $D$ be the dependency graph. Let $G_0,\ldots, G_{p^*-1}$ be the irreducible components of $D$. Now, for each $k = 0,\ldots, p^*-1$, we can define a CBN as follows:

\begin{definition}[Induced dynamics]
	An {\bf induced dynamics} on $G_k$ is a CBN whose dependency graph is $G_k$. 
\end{definition}

We can express the induced dynamics on $G_k$ explicitly as follows: Let $U_k = \{u_1,\ldots, u_m\}$, and $(y_1,\ldots, y_m)$ be the state of the network. Let $g_k= (g_{k_1},\ldots,g_{k_m})$ be the associated value update rule. Then,
$$
g_{k_i}(y_1,\ldots, y_m) = \prod_{u_j\in U_k} y^{\epsilon_{ji}}_j 
$$    
where $\epsilon_{ji} = 1$ if $u_j$ is an in-neighbor of $u_i$ and $\epsilon_{ji} = 0$ otherwise.

We now relate the original dynamics $f$ on $D$ to the induced dynamics on the irreducible components.  We first introduce some notations. Let $V'$ be a subset of $V$. We define $f_{V'}$ to be the restriction of $f$ to $V'$. For a positive integer~$p$, we let $f^p$ be the map defined by applying the map $f$ for $p$ times.
We now introduce the following result:

\begin{proposition}\label{dynamic}
	Let $G_k = (U_k, F_k)$ be an irreducible component of $D$. Then, the following hold: 
	\begin{enumerate}
		\item Let $g_{k}$ be the induced dynamics on $G_k$. Then, 
		$$
		g_k(x_{U_k}) = f^{p^*}_{U_k}(x), \hspace{10pt} \forall x \in \F^n_2. 
		$$
		\item Suppose that $x(t_0)$ is in a periodic orbit; then,
		\begin{equation}\label{eq:valueshift}
		x_{U_{(k+1 \bmod p^*)}}(t_0 + 1) = x_{U_{k}}(t_0)
		\end{equation}
	\end{enumerate}\,
\end{proposition}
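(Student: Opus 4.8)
The plan is to establish the two parts in order, since the first identity is the engine that drives the second. For part~1, I would first prove the general walk-product formula for a CBN: for every vertex $v_i$ and integer $m\ge 0$,
$$x_i(t+m) = \prod_{v\in \Nin^m(v_i)} x_v(t),$$
which follows by induction on $m$ from~\eqref{xtneighbor}, using idempotency over $\F_2$ so that a vertex reached by several length-$m$ walks is counted only once. Taking $m=p^*$ gives $(f^{p^*}(x))_{u_i}=\prod_{v\in\Nin^{p^*}(u_i;D)}x_v$ for $u_i\in U_k$. It then remains to identify $\Nin^{p^*}(u_i;D)$ with $\Nin(u_i;G_k)$: by Definition~\ref{Def:IrrComp} an edge $u_ju_i$ of $G_k$ corresponds exactly to a length-$p^*$ walk from $u_j$ to $u_i$ in $D$, while any vertex admitting such a walk satisfies $v\sim_{p^*}u_i$ (Definition~\ref{rel}) and hence lies in $[u_i]=U_k$. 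The two neighbor sets therefore coincide, and comparing the products yields $g_{k_i}(x_{U_k})=(f^{p^*}(x))_{u_i}$, which is part~1.

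For part~2, part~1 shows that $g_k(x_{U_k}(t))=x_{U_k}(t+p^*)$, so the $g_k$-trajectory from $x_{U_k}(t_0)$ samples the $f$-trajectory every $p^*$ steps. Since $x(t_0)$ lies on a periodic orbit whose period divides $p^*$ (Lemma~\ref{bijec}), we have $x(t_0+p^*)=x(t_0)$, so $x_{U_k}(t_0)$ is a fixed point of $g_k$. But $G_k$ is strongly connected and irreducible ($p^*=1$), so Lemma~\ref{bijec} applied to $g_k$ identifies its periodic orbits with the two necklaces of length one; hence the only fixed points of $g_k$ are $(0,\ldots,0)$ and $(1,\ldots,1)$. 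Consequently every vertex of $U_k$ shares a common value $z_k\in\{0,1\}$ at time $t_0$, i.e. $x_{U_k}(t_0)=z_k\mathbf{1}$.

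To finish, I would propagate this one step forward. By Lemma~\ref{pro:NinNout} (and the partition structure it encodes), every in-neighbor in $D$ of a vertex $u\in U_{(k+1\bmod p^*)}$ lies in $U_k$, and $\Nin(u;D)\neq\varnothing$ because $D$ is strongly connected. The update rule then gives $x_u(t_0+1)=\prod_{w\in\Nin(u;D)}x_w(t_0)=z_k$, since every factor equals $z_k\in\{0,1\}$ and the product is nonempty. Thus $x_{U_{(k+1\bmod p^*)}}(t_0+1)=z_k\mathbf{1}=x_{U_k}(t_0)$, which is exactly~\eqref{eq:valueshift}.

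I expect the crux to be the uniformity step in part~2 — proving that each irreducible component carries a single value on a periodic orbit. Routing this through part~1 and the length-one-necklace bijection is clean, whereas arguing directly that the conjunctive update cannot destroy information on a recurrent orbit is noticeably more delicate. One should also note that~\eqref{eq:valueshift} must be read as an equality of the constant vectors $z_k\mathbf{1}$, since the components $U_k$ need not have equal cardinality; this is precisely why establishing the uniformity of the within-component value is the heart of the argument.
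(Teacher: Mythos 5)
Your proof is correct, but there is no in-paper argument to compare it against: the paper never proves Proposition~\ref{dynamic}, deferring instead to the external reference~\cite{stabilityfull}, so your self-contained argument genuinely fills a gap. Both halves check out. In part~1, the walk-product formula (with idempotency of AND collapsing the multiset of length-$m$ walks to the set $\Nin^m(v_i)$) together with the identification $\Nin^{p^*}(u_i;D)=\Nin(u_i;G_k)$ via Definitions~\ref{rel} and~\ref{Def:IrrComp} gives exactly the claimed identity, and in part~2 your chain --- period divides $p^*$ so $x_{U_k}(t_0)$ is a fixed point of $g_k$; $G_k$ is strongly connected with loop number $1$ so its only periodic orbits are the two constant states; hence uniformity; then one-step propagation --- is sound, as is your reading of~\eqref{eq:valueshift} as an equality of constant vectors of possibly different lengths. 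Your ordering is also consistent with the paper's own architecture, which says Corollary~\ref{same} ``follows from the first item of Proposition~\ref{dynamic}.'' Two refinements are worth recording. First, the claim that every in-neighbor of $u\in U_{(k+1)\bmod p^*}$ lies in $U_k$ does need the small composition step beyond Lemma~\ref{pro:NinNout} as stated (which concerns only $U_0$): write $U_{(k+1)\bmod p^*}=\Nin^{(p^*-k-1)\bmod p^*}(U_0)$, so that $\Nin\bigl(U_{(k+1)\bmod p^*}\bigr)=\Nin^{(p^*-k)\bmod p^*}(U_0)=U_k$; your parenthetical gestures at this but it should be spelled out. Second, invoking Lemma~\ref{bijec} for the induced dynamics $g_k$ is legitimate inside this paper, where that lemma is quoted as an independently established fact, but it risks circularity relative to the source literature, where the necklace bijection is itself proved through this decomposition; you can avoid the issue entirely with an elementary replacement: at a fixed point of $g_k$, if some vertex of $U_k$ holds $0$, then every out-neighbor of it in $G_k$ must hold $0$ as well, and strong connectivity of $G_k$ propagates the $0$ to all of $U_k$, so every fixed point of $g_k$ is constant. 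With that substitution your proof uses only the structural facts the paper states and nothing about periodic orbits of the induced dynamics.
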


We note here that if $x(t_0)$ is in a periodic orbit, then for each $k = 0,\ldots, p^*-1$, the entries of $x_{U_k}(t_0)$ hold the same value. This indeed follows from the first item of Proposition~\ref{dynamic}:

\begin{corollary}\label{same}
	Let $D = (V, E)$ be the dependency graph of a CBN, and  $G_k = (U_k, F_k)$, for $k = 0,\ldots,p^*-1$, be its irreducible components. 
	A state $x\in \F^n_2$ is in a periodic orbit of the CBN if and only if for each $k=0,\ldots,p^*-1$, the entries of $x_{U_k}$ hold the same value.
\end{corollary}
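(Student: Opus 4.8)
The plan is to reduce the claim about the full dynamics $f$ on $D$ to a statement about each induced dynamics $g_k$ on the irreducible component $G_k$, using the identity $g_k(x_{U_k}) = f^{p^*}_{U_k}(x)$ furnished by the first item of Proposition~\ref{dynamic}. The crucial preliminary observation, which I would record first, is that since each $G_k$ is strongly connected with loop number $1$, Lemma~\ref{bijec} applied to $G_k$ identifies its periodic orbits with binary necklaces of length~$1$. There are exactly two such necklaces, namely $0$ and $1$, corresponding to the fixed points $\mathbf{0}$ and $\mathbf{1}$. Hence the only states of $g_k$ lying in a periodic orbit are the all-zeros and all-ones vectors, that is, precisely those states whose entries are all equal.

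For the forward (necessity) direction, I would start from the hypothesis that $x$ lies in a periodic orbit of $f$. By Lemma~\ref{bijec} its period $p$ divides $p^*$, so $f^{p^*}(x) = x$ and therefore $f^{p^*}_{U_k}(x) = x_{U_k}$ for every $k$. Invoking the first item of Proposition~\ref{dynamic}, this reads $g_k(x_{U_k}) = x_{U_k}$, so $x_{U_k}$ is a fixed point, and in particular a periodic state, of $g_k$. By the observation above, $x_{U_k} \in \{\mathbf{0}, \mathbf{1}\}$, i.e.\ its entries are all equal, which is what we want.

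For the reverse (sufficiency) direction, I would assume that for each $k$ the entries of $x_{U_k}$ are all equal, so that $x_{U_k} \in \{\mathbf{0}, \mathbf{1}\}$. Since $\mathbf{0}$ and $\mathbf{1}$ are fixed points of every CBN, they are fixed by $g_k$, whence $g_k(x_{U_k}) = x_{U_k}$. Applying the first item of Proposition~\ref{dynamic} again yields $f^{p^*}_{U_k}(x) = x_{U_k}$ for every $k$; because the sets $U_0,\ldots,U_{p^*-1}$ partition $V$, these identities assemble into $f^{p^*}(x) = x$, so $x$ is periodic and hence lies in a periodic orbit.

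Both directions run through the same identity $g_k(x_{U_k}) = f^{p^*}_{U_k}(x)$, so there is no serious computational obstacle. The one point requiring care is the component reduction in the necessity direction: I must first use the divisibility statement of Lemma~\ref{bijec} to upgrade ``$x$ is periodic'' to the exact equality $f^{p^*}(x) = x$, and then read off correctly, from Lemma~\ref{bijec} applied to the irreducible component $G_k$, that its only periodic states are the two constant vectors.
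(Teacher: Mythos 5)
Your proof is correct and follows essentially the same route the paper intends: the paper derives Corollary~\ref{same} directly from the identity $g_k(x_{U_k}) = f^{p^*}_{U_k}(x)$ in the first item of Proposition~\ref{dynamic}, combined with the fact (from Lemma~\ref{bijec} applied to each strongly connected, irreducible component $G_k$) that the only periodic states of an irreducible CBN are the constant vectors $\mathbf{0}$ and $\mathbf{1}$. You have merely written out the details the paper leaves implicit, including the divisibility step $f^{p^*}(x)=x$ and the reassembly over the partition $U_0,\ldots,U_{p^*-1}$ in the converse direction, both of which are handled correctly.
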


So, if $x(t_0)$ is in a periodic orbit, then from the second item of Proposition~\ref{dynamic} and Corollary~\ref{same}, the entries of $x_{U_k}(t_0)$ hold the same value, and moreover, this value will be passed onto the entries of $x_{U_{(k+1 \bmod p^*)}}$ at the next time step. 

\subsection{Analysis and Proof for Proposition~\ref{initial}}

It should be clear that after executing the ``while" loop of Algorithm~\ref{orbitalgo}, the state of the system is given by $x(\tau-1)=(1,\ldots,1)$. Then, by assigning $y_0$ to $x_i$ at time $\tau$, we have $x_i(\tau)=y$ and $x_j(\tau)=1$ for all $v_j\neq v_i$. We first have the following lemma.

\begin{lemma}\label{process}
	Let $D=(V,E)$ be the dependency graph of a CBN. Let $v_i\in V$ be arbitrary, and without loss of generality, assume that $v_i\in U_0$. Let the initial condition be $x_i(0)=y$ and $x_j(0)=1$ for all $v_j\in U_0$. Then, for $t'=0,\ldots,p^*-1$, we have 
	\begin{eqnarray}\label{y0}
	x_{\N^{t'}(v_i)}(t')=y{\bf 1}.
	\end{eqnarray}
\end{lemma}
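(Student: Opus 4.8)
The plan is to collapse the forward dynamics into a single product over time‑$0$ values and then to confine that product to the component $U_0$, where the initial data is completely prescribed. First I would record the elementary unfolding of a conjunctive update: for every vertex $v_k$ and every $t\ge 0$,
$$
x_k(t) = \prod_{v_m \in \Nin^t(v_k)} x_m(0),
$$
where $\Nin^t(v_k)$ is the set of vertices admitting a walk of length $t$ into $v_k$. This follows by a one‑line induction on $t$ from~\eqref{xtneighbor}, using that repeated factors are harmless under the AND (multiplication on $\F_2$). Thus all the time evolution is absorbed into knowing which vertices are the length‑$t'$ ancestors of a given vertex, and what their initial values are.

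Second, assuming without loss of generality that $v_i\in U_0$, I would pin down the relevant index sets using Lemma~\ref{pro:NinNout} together with the semigroup property $\N^{t'}(\N^{l}(\cdot))=\N^{t'+l}(\cdot)$. For $0\le t'\le p^*-1$ monotonicity gives $\N^{t'}(v_i)\subseteq \N^{t'}(U_0)=U_{t'}$, so the vertices whose values we must track at time $t'$ all lie in $U_{t'}$. Conversely, if $v_m\in\Nin^{t'}(v_k)$ with $v_k\in U_{t'}$ and $v_m\in U_l$, then $v_k\in\N^{t'}(v_m)\subseteq\N^{t'}(U_l)=U_{(l+t')\bmod p^*}$; since the $U_j$ are pairwise disjoint this forces $l+t'\equiv t'\pmod{p^*}$, i.e. $l=0$. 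Hence $\Nin^{t'}(v_k)\subseteq U_0$ for every $v_k\in U_{t'}$. This confinement is the crux, and it is exactly the point at which the hypothesis $t'\le p^*-1$ is used, so that the backward layers $U_{t'},U_{t'-1},\dots,U_0$ do not wrap around and escape $U_0$.

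Third, I would combine the two ingredients. Fix $t'\le p^*-1$ and $v_k\in\N^{t'}(v_i)$. By the unfolding, $x_k(t')=\prod_{v_m\in\Nin^{t'}(v_k)}x_m(0)$, and by the confinement each factor is the initial value of a vertex of $U_0$, hence equal to $1$ except possibly the factor $x_i(0)=y$. Because $v_k\in\N^{t'}(v_i)$ there is a walk of length $t'$ from $v_i$ to $v_k$, so $v_i\in\Nin^{t'}(v_k)$ and the factor $y$ is genuinely present. Therefore $x_k(t')=y\cdot 1\cdots 1=y$ for every such $v_k$, which is precisely $x_{\N^{t'}(v_i)}(t')=y\mathbf{1}$; the base case $t'=0$ is immediate since $\N^0(v_i)=\{v_i\}$.

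I expect the main obstacle to be the confinement $\Nin^{t'}(v_k)\subseteq U_0$, since it is what makes the conclusion uniform in $y$. For $y=0$ a single zero ancestor would already annihilate the product, but for $y=1$ one must rule out that some \emph{other} ancestor is $0$, which is guaranteed only because all length‑$t'$ ancestors lie in the all‑ones part of $U_0$. Establishing this cleanly is where the layered structure of the irreducible components (Lemma~\ref{pro:NinNout}), the disjointness of the partition, and the bound $t'\le p^*-1$ all come together.
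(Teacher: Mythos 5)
Your proof is correct, but it is packaged differently from the paper's. The paper proceeds by forward induction on $t'$: for $v_a\in\N^{k+1}(v_i)$ it invokes the one-step layering $\Nin(v_a)\subseteq U_k$ from Lemma~\ref{pro:NinNout} and then splits on $y$ --- if $y=0$, the in-neighbor $v_b\in\Nin(v_a)\cap\N^{k}(v_i)$, which is $0$ at time $k$ by the induction hypothesis, annihilates the conjunction; if $y=1$, the chain $x_{U_{k+1}}(k+1)=x_{U_k}(k)=\cdots=x_{U_0}(0)={\bf 1}$ gives $x_a(k+1)=1$. You instead write down the closed-form solution $x_k(t')=\prod_{v_m\in\Nin^{t'}(v_k)}x_m(0)$ (legitimate, by idempotency of multiplication on $\F_2$) and reduce everything to the ancestor confinement $\Nin^{t'}(v_k)\subseteq U_0$ for $v_k\in\N^{t'}(v_i)$, which you obtain from the disjointness of the partition $U_0,\ldots,U_{p^*-1}$ and the semigroup property of $\N$. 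The two arguments rest on the same structural fact (Lemma~\ref{pro:NinNout}), but yours buys two things: it treats $y=0$ and $y=1$ uniformly, with no case split and no nested induction, and it makes explicit that the conclusion is independent of the initial values outside $U_0$ --- precisely the feature that lets the lemma be applied repeatedly in the proof of Proposition~\ref{initial}, where the nodes outside $U_0$ carry leftover values from earlier assignments.

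One remark on your commentary rather than on your proof: the bound $t'\le p^*-1$ is not in fact what rescues the confinement step. Your own modular computation ($l+t'\equiv t' \pmod{p^*}$, hence $l=0$) goes through for every $t'\ge 0$ once $U_{t'}$ is read as $U_{t'\bmod p^*}$: the backward layers do wrap around, but after exactly $t'$ steps they land back in $U_0$. Consequently~\eqref{y0} holds for all $t'\ge 0$; the restriction to $t'\le p^*-1$ in the statement merely reflects what the application in Proposition~\ref{initial} requires, not a limit of the argument.
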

\begin{proof}
	The proof is carried out by induction on $t'$. For the base case $t'=0$, it is true since $\N^{0}(v_i)=\{v_i\}$, and hence $x_{\N^{0}(v_i)}(0)=x_i(0)=y$ by assumption. For the induction step, we assume that~\eqref{y0} holds for $t'=k$, where $0\leq k<p^*-1$; then we show that~\eqref{y0} holds for $t'=k+1$.
	
	Let $v_a$ be an arbitrary vertex in $\N^{k+1}(v_i)$, and $v_b\in \Nin(v_a)\cap \N^k(v_i)$. Then, by Lemma~\ref{pro:NinNout}, $v_a\in U_{k+1}$ and $v_b\in U_k$. Thus, $\Nin(v_a)\subseteq U_k$.  By induction assumption, $x_b(k)=y$. If $y=0$, then $x_a(k+1)=x_b(k)=0=y$. If $y=1$, then $x_{U}(0)={\bf 1}$ by assumption. Again from Lemma~\ref{pro:NinNout}, $\Nin(U_1)=U_0,\Nin(U_2)=U_1,\ldots,\Nin(U_{k+1})=\Nin(U_k)$. Thus, $x_{U_{k+1}}(k+1)=x_{U_k}(k)=\ldots=x_{U_0}(0)={\bf 1}$. This leads to $x_a(k+1)=1=y$.
\end{proof}

Before proceeding to the proof of Proposition~\ref{initial}, we need to revisit the fact we stated in Lemma~\ref{bijec}. In Lemma~\ref{bijec}, we have shown that there is a bijection between the set of periodic orbits and the set of binary necklaces of length $p^*$. Now, with the graph decomposition that we introduced in the previous subsection, one can describe the bijection map as follows: 
First, in Corollary~\ref{same}, we have shown that a state $x\in \mathbb{F}^n_2$ is in a periodic orbit if and only if for each $k=0,\ldots,p^*-1$, the entries of $x_{U_k}$ hold the same value.  Therefore, we represent this periodic orbit as a binary necklace $s=y_0\ldots y_{p^*-1}$, by taking the value of the entries of $x_{U_{k}}$ as $y_k$, for all $k=0,\ldots,p^*-1$. 

With the above fact and Lemma~\ref{process} at hand, we now prove Proposition~\ref{initial}.

\begin{proof}[Proof of Proposition~\ref{initial}]
	We first show that the state $x$ at time $(\tau+p^*-1)$ is given by~\eqref{beginfor}.
	
	Without loss of generality, assume that $v_i\in U_0$. Then, by assigning $y_{p^*-1}$ to $x_i$ at time $\tau$, we have $x_i(\tau)=y_{p^*-1}$ and $x_j(\tau)=1$ for all $v_j\neq v_i$. Then, by applying Lemma~\ref{process} with $t'=p^*-1$, we obtain that $x_{\N^{p^*-1}(v_i)}(T+p^*-1)=y_{p^*-1}{\bf 1}$. At time $\tau+1$, we are assigning $x_i(\tau+1)=y_{p^*-2}$. Note that $U_0,\ldots,U_{p^*-1}$ are pairwise distinct since they form a partition of $V$. Thus, $x_{U_0}(\tau+1)=x_{U_{p^*-1}}(\tau)={\bf 1}$. We can then apply Lemma~\ref{process} again with $t'=p^*-2$ to obtain that $x_{\N^{p^*-2}(v_i)}(\tau+p^*-1)=y_{p^*-2}{\bf 1}$. Continuing on this pattern, we will finally obtain that $x_{\N^{j}(v_i)}(\tau+p^*-1)=y_{j}{\bf 1}$ for all $j=0,\ldots,p^*-1$. Any vertices not reached by the assigned values at time $\tau+p^*-1$ still hold ``$1$". Thus, the state $x$ at time $(\tau+p^*-1)$ is given by~\eqref{beginfor}.

	We then show that the system~\eqref{eq:updaterule}, with~\eqref{beginfor} being the initial condition, will enter the periodic orbit $s$.
	Without loss of generality, assume that $v_i\in U_0$; then $\N^j(v_i)\subseteq U_{j}$ for $j=0,\ldots,p^*-1$. 
	
	If $y_j=0$, then $x_{U_j}(0)$ contains an entry of value $0$. Consider the induced dynamics on $G_j$:  First, from the value update rule and the first item of Proposition~\ref{dynamic}, if $x_{U_j}(0)$ contains an entry of value $0$, then so does $x_{U_j}(tp^*)$ for all $t \ge 0$. Second, since $G_0$ is irreducible, a periodic orbit of the induced dynamics has to be a fixed point~\cite{jarrah2010dynamics,gao2016periodic}. Combining these two facts, we know that there is a time $t_0\ge 0$ such that $x_{U_j}(tp^*) = {\bf 0}$ for all $t \ge t_0$. 
	
	If $y_j=1$, then $x_{U_j}(0)={\bf 1}$. We appeal again to the first item of Theorem~\ref{dynamic} and obtain
	$$
	x_{U_j}(tp^*) = f^{tp^*}_{U_j}(x_{U_j}(0)) = g^t_j(x'_{U_j}(0)) = x_{U_j}(0)={\bf 1}.
	$$
	
	Therefore, we conclude that $x_{U_j}(t_0p^*)=y_j{\bf 1}$, and this holds for all $j=0,\ldots,p^*-1$. The system is thus in periodic orbit $s=y_0\ldots y_{p^*-1}$.
\end{proof}

\end{document}